	\tikzstyle{vertex} = [circle, draw, fill=black!50, inner sep=0pt, minimum width=4pt]
	\tikzstyle{state} = [circle, draw]
	\tikzstyle{transition} = [graph edge, /eq={shape = ellipse, inner sep = 2pt}, font =]
	\tikzstyle{label} = [ellipse , inner sep = 1pt]
	\tikzstyle{every loop} = [min distance = .5cm]
	\tikzstyle{graph edge} = [thick,font = \small, ->, every node/.style = {label}]
	\tikzstyle{a} = ["$a$", graph edge, thin]
	\tikzstyle{b} = ["$b$", graph edge, very thick]
	\tikzstyle{c} = ["$c$", graph edge, densely dotted]
\def\@secnumpunct{. }
\titleformat{\subsection}[block]
{}{\thetitle. }{0pt}{\bf{#1.}}
\newcounter{subsectionn}
\titleclass{\subsectionn}{straight}[\section]
\titleformat{\subsectionn}{\textbf}{\thesubsectionn}{1em}{}
\renewcommand{\thesubsectionn}{\thesection.\arabic{subsectionn}}
\titleformat{\subsectionn}[runin]{}{\thetitle. }{0pt}{}
\titlespacing{\subsectionn}{0pt}{*1.5}{*1.5}
\newtheorem{thm}{Theorem}[section]
\newtheorem{lemma}[thm]{Lemma}
\newtheorem{cor}[thm]{Corollary}
\newtheorem{prop}[thm]{Proposition}
\newtheorem{conj}[thm]{Conjecture}
\newtheorem{problem}[thm]{Problem}
\theoremstyle{definition}
\newtheorem{Example}[thm]{Example}
\newtheorem{Defn}[thm]{Definition}
\def\zz{\mathbb Z}
\def\nn{\mathbb N}
\def\ca{\mathcal A}
\def\cb{\mathcal B}
\def\cC{\mathcal C}
\def\cm{\mathcal M}
\def\ve{\varepsilon}
\newcommand{\e}{\ve}
\def\wh{\widehat}
\def\wt{\widetilde}
\def\ol{\overline}
\def\<{\langle}
\def\>{\rangle}
\newcommand{\ra}{\rightarrow}
\newcommand{\ua}{\uparrow}
\renewcommand{\and}{\quad\text{and}\quad}
\def\id{{\rm id}}
\newcommand{\len}{\ell}
\newcommand{\edgeto}{\mathrel{\longrightarrow}}
\newcommand{\setc}[3][]{\left\{ #2 \,\middle|\, #3 \vphantom{#1|}\right\}}
\newcommand{\gen}[1]{\left\< #1 \right\>}
\newcommand{\abs}[2][]{\left|#2 \vphantom{#1|}\right|}
\DeclareMathOperator{\Aut}{Aut}
\DeclareMathOperator{\diam}{diam}
\DeclareMathOperator{\da}{\downarrow}
\newlength{\vdropheight}
\newcommand{\vdrop}[2][\vdropheight]{
	\raisebox{\minof{#1-\height}{0pt}}{#2}
}
\newcommand{\edgetol}{\@ifstar{\edgetol@drop}{\edgetol@nodrop}}
\newlength{\edgetol@width}
\def\edgetolfont{\small}
\newcommand{\edgetol@nodrop}[2][]{
	\settowidth{\edgetol@width}{\edgetolfont $#2$}
	\pgfmathsetlength{\edgetol@width}{max(\edgetol@width+4pt,1.5em)}
	\mathrel{%
		\tikz[baseline=-\the\dimexpr\fontdimen22\textfont2\relax]{
			\draw (0,0) edge[graph edge, /eq = {rectangle, inner sep = 2pt,font = \edgetolfont}, #1, "\raisebox{3pt}{$#2$}" {inner sep = 0pt}] (\the\edgetol@width,0);
		}%
	}
}
\newlength{\edgetol@dropheight}
\newcommand{\edgetol@drop}[2][]{
	\settoheight{\edgetol@dropheight}{$a^b$}
	\mathrel{%
		\vdrop[\edgetol@dropheight]{$\edgetol@nodrop[#1]{#2}$}%
	}
}
\newcommand{\edgetoa}{\edgetol[a]{}}
\newcommand{\edgetob}{\edgetol[b]{}}
\newcommand{\edgetoc}{\edgetol[c]{}}
\newcommand{\act}[2]{\prescript{#1\!}{}{#2}}
\newcommand{\T}{\mathbf T}
\renewcommand{\S}{\mathbf S}
\newcommand{\words}[1]{#1^{*,\infty}}
\newcommand{\rwords}[1]{#1^{-\infty,*}}
\DeclareMathOperator{\aAut}{FAut}
\newcommand{\sigmaw}{\bm{\sigma}}
\newcommand{\tauw}{\bm{\tau}}
\renewcommand{\sec}[2]{{#1|}_{#2}}
\newcommand{\ft}{\mathop:}
\begin{document}

\title{Lifts, derandomization, and diameters of Schreier graphs \\
of Mealy automata}

\author[Anton~Malyshev]{ \ Anton~Malyshev$^\star$}
\author[Igor~Pak]{ \ Igor~Pak$^\star$}

\thanks{\thinspace ${\hspace{-.45ex}}^\star$Department of Mathematics,
UCLA, Los Angeles, CA, 90095.
\hskip.06cm
Email:
\hskip.06cm
\texttt{\{amalyshev,pak\}@math.ucla.edu}}

\begin{abstract}
It is known that random \emph{$2$-lifts} of graphs give rise to expander graphs. We present a new conjectured derandomization of this construction based on certain \emph{Mealy automata}. We verify that these graphs have polylogarithmic diameter, and present a class of automata for which the same is true.  However, we also show that some automata in this class do not give rise to expander graphs.
\end{abstract}

\maketitle
\theoremstyle{plain}

\section{Introduction}\label{section:introduction}

\noindent
In~\cite{random_lifts_expansion}, Bilu and Linial showed that random \emph{$2$-lifts} of expanding graphs remain expanding with high probability. This gives a probabilistic construction of expander families.  Several ways to derandomize this procedure are also given in~\cite{random_lifts_expansion}, but none of them give a \emph{strongly explicit} description of a family of expander graphs.  That is, a description in which the actual graph is much larger than working memory, but a computer can list neighbors of a vertex in polylogarithmic (in the size of the graph) time.

We consider the following two families of $2$-lifts of graphs.  The \emph{Aleshin graphs} $A_0, A_1, A_2, \dots$ are a sequence of $3$-regular edge-labeled directed graphs. The first graph $A_0$ is defined to be a single vertex with three self-loops labeled $a$, $b$, and $c$. Given the graph $A_n$, the next graph $A_{n+1}$ is defined as a certain \emph{graph lift} of $A_n$: Each vertex $v \in A_n$ lifts to two vertices $v_0, v_1 \in A_{n+1}$, and the edges transform as follows:
\tikzstyle{cmatrix} = [matrix of math nodes, ampersand replacement=\&, column sep = 1.6em, row sep = 4pt, inner sep = 2pt, nodes={anchor = base west}]
\def\crossangle{15}
\newcommand{\cross}[6]{
	\tikz[baseline, every edge quotes/.style = {rectangle, pos = .05,auto}]{
		\matrix (m)
		[cmatrix]{
			#1 \& #2\\
			#3 \& #4\\
		};
		\draw
			(m-1-1.east) edge[#5,out = 0-\crossangle, in = 180-\crossangle] (m-2-2.west)
			(m-2-1.east) edge[#6,out = 0+\crossangle, in = 180+\crossangle, swap] (m-1-2.west)
		;
	}
}
\newcommand{\nocross}[6]{
	\tikz[baseline]{
		\matrix (m)
		[cmatrix]{
			#1 \& #2\\
			#3 \& #4\\
		};
		\draw
			(m-1-1.east) edge[#5] (m-1-2.west)
			(m-2-1.east) edge[#6] (m-2-2.west)
		;
	}
}
\newcommand{\crossvw}[2]{
	\cross{v_0}{w_0}{v_1}{w_1}{#1}{#2}
}
\newcommand{\nocrossvw}[2]{
	\nocross{v_0}{w_0}{v_1}{w_1}{#1}{#2}
}
\begin{align*}
	v \edgetoa w
	\qquad \text{lifts to} & \qquad
	\nocrossvw{c}{c}
\\
	v \edgetob w \qquad \text{lifts to} & \qquad
	\crossvw{a}{b}
\\
	v \edgetoc w \qquad \text{lifts to} & \qquad
	\cross{v_0}{w_0}{v_1}{w_1.}{b}{a}
\end{align*}
That is, e.g., if $A_n$ has an edge labeled $c$ from $v$ to $w$, then $A_{n+1}$ has an edge labeled $b$ from $v_0$ to $w_1$, and an edge labeled $a$ from $v_1$ to $w_0$.

Another family, the \emph{Bellaterra graphs} $B_0, B_1, B_2, \dots$ is defined the same way, except with transformation rules
\begin{align*}
	v \edgetoa w \qquad \text{lifts to} & \qquad
	\crossvw{c}{c}
\\
	v \edgetob w \qquad \text{lifts to} & \qquad
	\nocrossvw{a}{b}
\\
	v \edgetoc w \qquad \text{lifts to} & \qquad
	\nocross{v_0}{w_0}{v_1}{w_1.}{b}{a}
\end{align*}
It is not hard to check that the reverse of every edge in $B_n$ is also in $B_n$, so these can be thought of as undirected graphs. The first few graphs in this family are pictured in Figure~\ref{figure:bellaterra_graphs}.


\begin{figure}
\captionsetup[subfloat]{labelformat=empty}
\pgfmathsetmacro{\lR}{.75}
\tikzmath{
	int bendangle;
	\bendangle0 = 90;
	\bendangle1 = 40;
	\bendangle2 = 20;
	\bendangle3 = 15;
	\bendangle4 = 10;
}
\subfloat[$B_0$]{
\raisebox{1cm}{
\begin{tikzpicture}[graph edge/.add style={}{-, font = \scriptsize}, scale = 1.5]
	\node at (0,0) [vertex] (x) {};
	
    \draw 
        (x) edge[loop above, /eq = left, a] (x)
           	edge[/loop at = 90 + 120,/eq=left, b]  (x)
           	edge[/loop at = 90 - 120,/eq= right, c] (x);
\end{tikzpicture}
}
}
\subfloat[$B_1$]{
\raisebox{1cm}{
\begin{tikzpicture}[graph edge/.add style={}{-, font = \scriptsize},scale = 1.5]
	\node at (\lR,0) [vertex] (0) {};
	\node at (-\lR,0) [vertex] (1) {};
	
    \draw
        (0) edge[/dir bend = {180}{-\bendangle1}, /eq = above, c] (1)
        (0)	edge[/loop at = 90 + 120,/eq=left, a] (0)
        (1)	edge[/loop at = 90 + 120,/eq=left, b] (1)
        (0)	edge[/loop at = 90 - 120,/eq=right, b] (0)
        (1)	edge[/loop at = 90 - 120,/eq=right, a] (1)
        ;
\end{tikzpicture}
}
}
\subfloat[$B_2$]{
\begin{tikzpicture}[graph edge/.add style={}{-, font = \scriptsize},scale = 1.5]
	\node at (\lR,\lR) [vertex] (00) {};
	\node at (-\lR,\lR) [vertex] (01) {};
	\node at (\lR,-\lR) [vertex] (10) {};
	\node at (-\lR,-\lR) [vertex] (11) {};

    \draw 
        (00)	edge[/dir bend = {-90}{-\bendangle1},/eq=right, c] (10)
        (01)	edge[/dir bend = {-90}{\bendangle1}, in = 90 - \bendangle1,/eq=left, c] (11)
        (01)	edge[/loop at = 90 + 120,/eq=left, a] (01)
        (11)	edge[/loop at = 90 + 120,/eq=left, b] (11)
        (00)	edge[/loop at = 90 - 120,/eq=right, a] (00)
        (10)	edge[/loop at = 90 - 120,/eq=right, b] (10)
        (00) 	edge[/dir bend = {180}{-\bendangle2}, /eq = above, b] (01)
        (10)	edge[/dir bend = {180}{-\bendangle2}, /eq = above, a] (11)
        ;
\end{tikzpicture}
}

\subfloat[$B_3$]{
\begin{tikzpicture}[graph edge/.add style={}{-, font = \scriptsize},scale = 1.5]
	
	\begin{scope}[xscale = 1.5, yshift = .5*\lR cm]
		\node at (\lR,\lR) [vertex] (000) {};
		\node at (-\lR,\lR) [vertex] (001) {};
		\node at (\lR,-\lR) [vertex] (010) {};
		\node at (-\lR,-\lR) [vertex] (011) {};
	\end{scope}

	\node at (\lR,\lR) [vertex] (100) {};
	\node at (-\lR,\lR) [vertex] (101) {};
	\node at (\lR,-\lR) [vertex] (110) {};
	\node at (-\lR,-\lR) [vertex] (111) {};
	
    \draw 
        (001)	edge[/dir bend = {-60}{-\bendangle0}, /eq={left, near start}, c, min distance = .5cm] (101)
        (000)	edge[/dir bend = {-120}{\bendangle0}, /eq={right, near start}, c, min distance = .5cm] (100)
        (010) 	edge[/dir bend = {180}{-\bendangle3}, /eq = {below, near end}, c] (111)
        (110) 	edge[/dir bend = {180}{-\bendangle3}, /eq = {below, near start}, c] (011)
        (011)	edge[/loop at = 90 + 120 - 10,/eq=left, a] (011)
        (111)	edge[/loop at = 90 + 120 - 10,/eq=left, b] (111)
        (010)	edge[/loop at = 90 - 120 + 10,/eq=right, a] (010)
        (110)	edge[/loop at = 90 - 120 + 10,/eq=right, b] (110)
        (000) 	edge[/dir bend = {180}{-\bendangle3}, /eq = above, a] (001)
        (100) 	edge[/dir bend = {180}{-\bendangle3}, /eq = below, b] (101)
        (000)	edge[/dir bend = {-90}{-\bendangle2},/eq={right, pos = .6}, b] (010)
        (100)	edge[/dir bend = {-90}{-\bendangle2},/eq={left, pos = .4}, a] (110)
        (001)	edge[/dir bend = {-90}{\bendangle2},/eq={left, pos = .6}, b] (011)
        (101)	edge[/dir bend = {-90}{\bendangle2},/eq={right, pos = .4}, a] (111)
        ;
\end{tikzpicture}
}
\subfloat[$B_4$]{
\begin{tikzpicture}[graph edge/.add style={}{-, font = \scriptsize}, scale = 1.5]

	\begin{scope}[xscale = 1.7, yshift = .7*\lR cm]
		\node at (\lR,\lR) [vertex] (0000) {};
		\node at (-\lR,\lR) [vertex] (0001) {};
		\node at (\lR,-\lR) [vertex] (0010) {};
		\node at (-\lR,-\lR) [vertex] (0011) {};
	\end{scope}

	\node at (\lR,\lR) [vertex] (0100) {};
	\node at (-\lR,\lR) [vertex] (0101) {};
	\node at (\lR,-\lR) [vertex] (0110) {};
	\node at (-\lR,-\lR) [vertex] (0111) {};
	
	\begin{scope}[xscale = 1.2, yshift = -.25*\lR cm]
		\begin{scope}[xscale = 1.6, yshift = .7*\lR cm]
			\node at (\lR,\lR) [vertex] (1000) {};
			\node at (-\lR,\lR) [vertex] (1001) {};
			\node at (\lR,-\lR) [vertex] (1010) {};
			\node at (-\lR,-\lR) [vertex] (1011) {};
		\end{scope}

		\node at (\lR,\lR) [vertex] (1100) {};
		\node at (-\lR,\lR) [vertex] (1101) {};
		\node at (\lR,-\lR) [vertex] (1110) {};
		\node at (-\lR,-\lR) [vertex] (1111) {};
	\end{scope}
	
    \draw 
        (0011)	edge[/dir bend={100}{\bendangle0}, /eq=left, c, min distance = .5cm] (1011)
        (0010)	edge[/dir bend={80}{-\bendangle0}, /eq=right, c, min distance = .5cm] (1010)

        (0000) 	edge[/dir bend = {180}{-\bendangle4}, /eq = {above, near start}, c] (1001)
        (0100)	edge[/dir bend = {-95}{-\bendangle4}, /eq={left, near start}, c] (1110)
        (0101)	edge[/dir bend = {-85}{\bendangle4},/eq={right, near start}, c] (1111)
        
        (1000) 	edge[/dir bend = {180}{-\bendangle4}, /eq = {above, near end}, c] (0001)
        (1100)	edge[/dir bend = {-90}{-\bendangle3},/eq={right, near start}, c] (0110)
        (1101)	edge[/dir bend = {-90}{\bendangle3},/eq={left, near start}, c] (0111)

        (0111)	edge[/loop at = 90 + 120 - \bendangle2,/eq=left, a] (0111)
        (0110)	edge[/loop at = 90 - 120 + \bendangle2,/eq=right, a] (0110)
        (1111)	edge[/loop at = 90 + 120 - \bendangle2,/eq=left, b] (1111)
        (1110)	edge[/loop at = 90 - 120 + \bendangle2,/eq=right, b] (1110)

        (0100) 	edge[/dir bend = {180}{-\bendangle4}, /eq = above, a] (0101)
        (0000)	edge[/dir bend = {-90}{-\bendangle3},/eq={left, near end}, a] (0010)
        (0001)	edge[/dir bend = {-90}{\bendangle3},/eq={right, near end}, a] (0011) 
        (1100) 	edge[/dir bend = {180}{-\bendangle4}, /eq = below, b] (1101)
        (1000)	edge[/dir bend = {-90}{-\bendangle3},/eq={right}, b] (1010)
        (1001)	edge[/dir bend = {-90}{\bendangle3},/eq={left}, b] (1011)
        
        (0001)	edge[/dir bend = {-45}{-\bendangle2},/eq={above right}, b, min distance = 0cm] (0101)
        (0000)	edge[/dir bend = {-135}{\bendangle2},/eq={above left}, b, min distance = 0cm] (0100)
        (0010) 	edge[/dir bend = {180}{-\bendangle4}, /eq = {above}, b] (0111)
        (0110) 	edge[/dir bend = {180}{-\bendangle4}, /eq = {above}, b] (0011) 
        
        (1001)	edge[/dir bend = {-45}{-\bendangle2},/eq={above right, near end, inner sep = .8pt}, a, min distance = 0cm] (1101)
        (1000)	edge[/dir bend = {-135}{\bendangle2},/eq={above left, near end, inner sep = .8pt}, a, min distance = 0cm] (1100)
        (1010) 	edge[/dir bend = {180}{-\bendangle4}, /eq = {below, near end}, a] (1111)
        (1110) 	edge[/dir bend = {180}{-\bendangle4}, /eq = {below, near start}, a] (1011)
        ;
\end{tikzpicture}
}
\caption{The Bellaterra graphs}
\label{figure:bellaterra_graphs}
\end{figure}

The main result of this paper is the following theorem:

\begin{thm}
The diameter of the Aleshin graphs $\{A_i\}_{i=1}^\infty$ and Bellaterra graphs $\{B_i\}_{i=1}^\infty$ grows at most quadratically in $n$, i.e.,
$$\diam(A_n) = O(n^2) \quad\text{and}\quad \diam(B_n) = O(n^2) \quad \text{as} \ \ \, n\to \infty.$$
\end{thm}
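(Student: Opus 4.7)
The plan is to prove both bounds by induction on $n$, establishing the additive recursion $\diam(A_{n+1}) \le \diam(A_n) + O(n)$ (and the analogous estimate for $B_n$), which telescopes to the desired $O(n^2)$ bound.

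First I would recast the lifting rules in the standard wreath-product language. Identifying $v_0 \deq 0v$ and $v_1 \deq 1v$, one reads off each generator as an element of $S_2 \ltimes (G \times G)$: for the Aleshin automaton $a = \sigma(b,c)$, $b = \sigma(c,b)$, $c = (a,a)$, and for Bellaterra $a = (b,c)$, $b = (c,b)$, $c = \sigma(a,a)$, where $\sigma \in S_2$ is the nontrivial swap and $(s_0,s_1)$ records the sections on the two subtrees. Thus $A_n$ and $B_n$ are the Schreier graphs of the level-$n$ action of $G = \gen{a,b,c}$ on the binary rooted tree. For the inductive step, by the triangle inequality it suffices to bound $\max_y d(0^{n+1}, y)$; writing $y = xv$ with $x \in \{0,1\}$ and $v \in \{0,1\}^n$, the inductive hypothesis gives a word $h$ of length at most $\diam(A_n)$ with $h \cdot 0^n = v$. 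I would then produce a level-$(n+1)$ word in two phases: lift $h$ to a top-trivial element $\tilde h$ whose $0$-section acts on level $n$ as $h$ does, so that $\tilde h \cdot 0^{n+1} = 0v$; then, if $x = 1$, apply a short word to flip the top bit (a single $c$ in Bellaterra, or a constant-length product in Aleshin) together with a bounded correction to undo its effect on the suffix.

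The crux of the argument, and the main obstacle, is a section-lifting lemma: every length-$\ell$ word $h$ in the generators admits a top-trivial lift $\tilde h$ of length $\ell + O(n)$ with $\tilde h|_0 = h$ in the level-$n$ action. A naive generator-by-generator lift succeeds only when every generator is itself the $0$-section of some short top-trivial element (for instance, $c = (a,a)$ in Aleshin lifts $a$ this way, but no short top-trivial element of Aleshin has $0$-section equal to $b$ alone). The remedy is to exploit short self-similar identities such as $c^2 = (a^2,a^2)$, $a^2 = (cb,bc)$, and $b^2 = (bc,cb)$ in the Aleshin automaton and their Bellaterra analogues, and to assemble correction words whose cumulative cost across the $n$ inductive steps is additive in $n$ rather than multiplicative. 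The involutive symmetry of the Bellaterra automaton will simplify this accounting, but the same scheme governs both cases.
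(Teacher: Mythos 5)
Your overall skeleton --- pay $O(n)$ per level to fix one extra coordinate and telescope to $O(n^2)$ --- is the same as the paper's, but the crux of your argument is missing, and the specific mechanism you propose for it provably cannot work. The paper appends the new letter at the \emph{end} of the word, where the action on the first $n$ letters of a length-$(n{+}1)$ word is given by the \emph{same} word of generators (a quotient, not a section); the entire difficulty is then concentrated in producing, for each $n$, a single word $u_n$ of length $O(n)$ with $\act{u_n}{(1^n0)} = 1^{n+1}$, and this is obtained by a pigeonhole count of reduced words together with a genuinely substantive transitivity lemma for the \emph{dual} automaton (proved by conjugating to a binary tree and computing a sign-character power series in $\zz_2[[t]]$). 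You instead prepend the new letter, which forces you through sections, and your ``section-lifting lemma'' is exactly where the difficulty reappears --- unproven. Note first that the lemma is a detour: since the projection forgetting the first letter is a covering map of unlabeled Schreier graphs, any length-$\ell$ path in $A_n$ from $0^n$ to $v$ lifts for free (with different edge labels) to a length-$\ell$ path from $0^{n+1}$ to $x'v$ for some $x'$. The real problem is the remaining step you dismiss as ``a single $c$ plus a bounded correction'': flipping the first bit while fixing the suffix $v$ at level $n$.

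That step cannot be bounded the way you suggest. In the Bellaterra automaton the sections of the generators are $a|_0=b$, $a|_1=c$, $b|_0=c$, $b|_1=b$, $c|_0=c|_1=a$, so for any word $w$ the number of letters $a$ occurring in the (unreduced) section word $w|_0$ equals the number of letters $c$ in $w$; since $w$ is top-trivial exactly when it has an even number of $c$'s, and free-product reduction in $C_2*C_2*C_2$ preserves the parity of each letter count, \emph{no} top-trivial word has $0$-section equal to $a$ as a group element. Hence the correction needed after applying $c=\sigma(a,a)$ can only agree with $a^{-1}=a$ modulo the level-$n$ stabilizer, it cannot have constant length uniformly in $n$ (a fixed word acting as $a$ on every level would equal $a$ by faithfulness), and bounding its length by $O(n)$ is precisely the content of the paper's Lemmas 6.1--6.3 and the pigeonhole argument --- nothing in your sketch substitutes for them. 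Separately, the identities $c=(a,a)$, $a^2=(cb,bc)$, $b^2=(bc,cb)$ only give cost-ratio-one lifts of words in the proper subgroup $\gen{a,\,cb,\,bc}$ of $F_3$; a generic geodesic word is not of this form, and lifting letter-by-letter through stabilizer preimages multiplies lengths by a constant $>1$, turning your intended recursion $\diam(A_{n+1})\le\diam(A_n)+O(n)$ into $\diam(A_{n+1})\le C\,\diam(A_n)+O(n)$ and the bound into an exponential one. To repair the proposal you would need, at minimum, the paper's dual-transitivity lemma or an equivalent statement.
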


Prior to this paper, there were no nontrivial bounds on the diameter of~$A_n$; even  subexponential bounds remained out of reach. Note also that in principle we can start with \emph{any} 3-labeled graph in place of $A_0=B_0$, and proceed making lifts as above.  We do not consider these in the paper, and our algebraic techniques do not apply.

\smallskip

 Observe that both families of graphs are \emph{very explicit} in the following sense: there is a polynomial time algorithm which, given a number $n$ and $v \in \Gamma_n$, lists the neighbors of $n$. ``Polynomial time'' here refers to a runtime which is polynomial in the number of bits necessary to describe the input. It takes $n$ bits to describe a vertex of $B_n$ or $A_n$, so the algorithm should run in time $O(n^d)$, for some~$d$.

In particular, it follows that they are \emph{strongly explicit} in the sense of \cite{random_lifts_expansion}: There is a polynomial time (in the size of the inputs) algorithm which, given a number $n$, and vertices $v, w \in \Gamma_n$, decides whether $v$ and $w$ are adjacent in $\Gamma_n$.

As we will see below, these graphs can be described in terms of invertible \emph{Mealy automata}. The associated automata are small: they act on binary strings and have only 3 states. A detailed study of all such small automata was performed in~\cite{3_state_automata}. The Bellaterra and Aleshin automata are numbered $846$ and $2240$ in that article. They are the only nontrivial \emph{bireversible} ones. Spectra of the first few associated graphs are also computed in~\cite{3_state_automata}, and the data suggest that the Aleshin graphs are a family of expanders with eigenvalue gap roughly~$0.2$.

\begin{conj}\label{conj:aleshin_expanders}
The Aleshin graphs $\{A_i\}_{i=1}^\infty$ are a family of two-sided expanders.
\end{conj}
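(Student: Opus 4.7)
The plan is to attack Conjecture~\ref{conj:aleshin_expanders} using the signed adjacency framework of Bilu--Linial~\cite{random_lifts_expansion}, adapted to the deterministic and highly structured $2$-lifts that define the Aleshin graphs. Symmetrize $A_n$ to a $6$-regular graph with adjacency matrix $M_n = \wt A_n + \wt A_n^{\top}$, where $\wt A_n$ is the adjacency matrix of the directed graph $A_n$. Since $A_{n+1}$ is a $2$-lift of $A_n$, the spectrum of $M_{n+1}$ is the disjoint union of the spectrum of $M_n$ and the spectrum of a signed matrix $S_n$, obtained from $M_n$ by inserting $+1$ at entries whose lift preserves layers (the $a$-edges) and $-1$ at entries whose lift swaps layers (the $b$- and $c$-edges). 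By induction on $n$, two-sided expansion of $\{A_n\}$ reduces to the uniform bound $\max_n \|S_n\| \le 6 - \ve$ for some fixed $\ve > 0$.

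To control $\|S_n\|$ I would use the self-similar structure of the Aleshin automaton. Writing $S_{n+1}$ as a $2 \times 2$ block matrix in the natural way yields
\[
 S_{n+1} \;=\; \begin{pmatrix} D_n & X_n \\ X_n^{\top} & D_n' \end{pmatrix},
\]
where the diagonal blocks $D_n, D_n'$ come from edges of $A_{n+1}$ staying within one layer and the off-diagonal block $X_n$ from crossing edges; each block is expressible as a signed sum of permutation matrices for the three Aleshin generators acting on $A_n$. This block structure should be the basis for an inductive control of $\|S_n\|$, either by extracting a scalar recursion for the norms or, more robustly, by analyzing how the signed matrix evolves as $n$ grows.

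The main obstacle is closing such a recursion with a uniform bound strictly smaller than $6$. Probabilistic concentration, the core tool in the random-lift setting of Bilu--Linial, is unavailable because the signing is fixed by the automaton, so one needs a structural argument for sign cancellation. The most promising route is a trace-moment estimate: bound $\operatorname{tr}(S_n^{2k})$ by summing over closed walks in $M_n$ weighted by products of signs, and exploit the Vorobets--Vorobets theorem, which identifies the Aleshin group with the free group $F_3$. In this framework, closed walks in $A_n$ correspond to elements of $\Stab_{F_3}(\{0,1\}^n) \subset F_3$, and one seeks a bound of the form $\operatorname{tr}(S_n^{2k}) \le |A_n| \cdot C^{k}$ with an explicit $C < 36$; this is essentially a property~$(\tau)$ statement for $F_3$ with respect to this tower of subgroups, which is the technical heart of the conjecture. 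The numerical gap of roughly $0.2$ observed in~\cite{3_state_automata} strongly suggests such cancellation holds, but proving it rigorously will likely require using the bireversibility of the Aleshin automaton in a more delicate way than is needed for the quadratic diameter bound proved above.
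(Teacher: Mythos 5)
The statement you are addressing is a \emph{conjecture}, and the paper contains no proof of it: the authors explicitly write that their tools are too weak to establish expansion, and they offer only numerical evidence (Figure~\ref{figure:ab_gaps}) together with the much weaker quadratic diameter bound of Corollary~\ref{cor:aleshin_diameter}. Your proposal is likewise not a proof. The setup is sound as far as it goes: symmetrizing to a $6$-regular graph, observing that $M_{n+1}$ is a genuine $2$-lift of $M_n$ (the lift rules are consistent under edge reversal, with the $a$-edges parallel and the $b$-, $c$-edges crossing), and invoking the Bilu--Linial spectral decomposition correctly reduces two-sided expansion of the whole family to the uniform estimate $\sup_n \|S_n\| \le 6 - \ve$. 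But that estimate is the entire content of the conjecture, and nothing in your proposal supplies it. The block decomposition of $S_{n+1}$ is a reformulation, not an argument: a naive norm recursion from a $2\times 2$ block structure gives at best $\|S_{n+1}\| \le \|D_n\| + \|X_n\|$, which saturates at $6$ with no room for the needed cancellation, and you do not identify any mechanism that would beat this trivial bound.

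The trace-moment route has the same problem in sharper form. Bounding $\operatorname{tr}(S_n^{2k}) \le |A_n|\cdot C^k$ with $C < 36$, uniformly in $n$ for $k \gtrsim \log |A_n|$, is precisely property $(\tau)$ for $F_3$ with respect to the tower of level stabilizers --- as you yourself note, this is ``the technical heart of the conjecture.'' Free groups famously do \emph{not} have property $(\tau)$ with respect to arbitrary towers of finite-index subgroups (they surject onto $\zz$), so the Vorobets--Vorobets identification $G_\ca \cong F_3$ cannot by itself yield the sign cancellation; one would need to use the specific combinatorics of this tower, i.e.\ the bireversibility and self-similarity of the automaton, in a way that no one (including the authors, per Section~\ref{section:remarks}) currently knows how to do. In short: your reduction to a uniform bound on the signed adjacency operators is a correct and reasonable framing, but the proposal stops exactly where the open problem begins, so it should be presented as a research program, not a proof.
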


Here by \emph{two-sided} we mean that both the second largest and the
smallest eigenvalues of $3$-regular graphs $A_n$ are bounded away
 as follows: $\lambda_2<3-\varepsilon$ and $\lambda_n > -3+\ve$ (see e.g.~\cite{Tao}).

\begin{figure}
%\tikzset{external/export next=false}
\begin{tikzpicture}[x = .5cm , y = 10cm]
	
    \pgfmathsetmacro\bbleft{1}
    \pgfmathsetmacro\bbright{20.5}
    \pgfmathsetmacro\bbbottom{0}
    \pgfmathsetmacro\bbtop{.37}

	\draw [very thin,color=black!10!white, xstep = 1, ystep = .1]
		(\bbleft , \bbbottom) grid (\bbright,\bbtop)
	;

	\foreach \x  in {1,...,20}{
		\pgfmathsetmacro\xtext{ \x }
		\draw (\x ,2pt) -- (\x ,-2pt) node[below] {\tiny $\xtext$};
	}
	
	\draw [->] (\bbleft, \bbbottom) -- (\bbright, \bbbottom);

    \foreach \y  in {0,...,3}{
    	\pgfmathparse{\y*.1}
        \pgfmathsetmacro\ytext{\pgfmathresult}
        \draw (\bbleft, \ytext) +(-2pt,0) -- +(2pt,0) node[ellipse, left] {\tiny $\pgfmathprintnumber{\ytext}$};
    }
    \draw[->] (\bbleft, \bbbottom) -> (\bbleft, \bbtop);

    \begin{pgfinterruptboundingbox}
	\path[clip] (0cm, \bbbottom) rectangle (100cm, \bbtop);
	
	\draw plot file{aleshin_gaps.dat};
	\draw (7, 0.258) node [above right, font = \small]{Aleshin};
	\draw[thick, densely dotted] plot file{bellaterra_gaps.dat};
	\draw (6, 0.067) node[above right, font = \small]{Bellaterra};
	\end{pgfinterruptboundingbox}

\end{tikzpicture}

\caption{Eigenvalue gaps of the Bellaterra and Aleshin graphs.}
\label{figure:ab_gaps}
\end{figure}
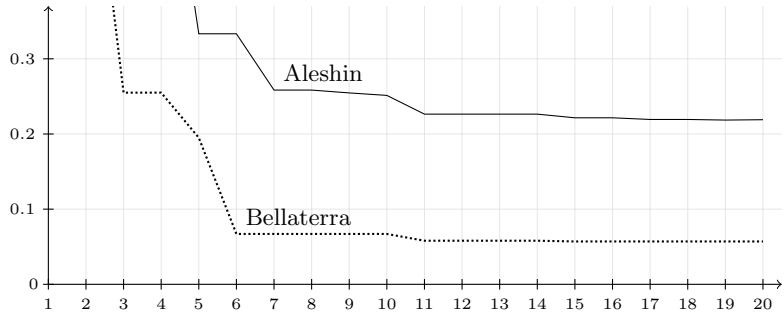

Though it is less clear from the data in~\cite{3_state_automata}, our own computations (see Figure~\ref{figure:ab_gaps}) suggest that the Bellaterra graphs are also expanders, with eigenvalue gap roughly $0.05$, so we make the stronger conjecture:\footnote{See Remark~\ref{remark:bellaterra_implies_aleshin}.}

\begin{conj}\label{conj:bellaterra_expanders}
The Bellaterra graphs $\{B_i\}_{i=1}^\infty$ are a family of two-sided expanders.
\end{conj}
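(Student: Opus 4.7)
The natural framework is the Bilu--Linial theory of 2-lifts \cite{random_lifts_expansion}: if $\wt G \to G$ is a 2-lift encoded by a signing $s \colon E(G) \to \{\pm 1\}$, the adjacency spectrum of $\wt G$ is the disjoint union, with multiplicities, of the spectrum of $A(G)$ and that of the signed adjacency matrix $A^s$. Applied recursively to $B_{n+1} \to B_n$, Conjecture~\ref{conj:bellaterra_expanders} reduces to the following bound: there exists $\rho < 3$ such that, for every $n$, the signed adjacency matrix $A^{s_n}$ of $B_n$ associated to the Bellaterra lift satisfies $\rho(A^{s_n}) \le \rho$. Together with the base case, this yields a uniform two-sided gap, since $A^{s_n}$ is symmetric and its spectral radius simultaneously bounds the largest and the most negative new eigenvalue introduced at each lift step.

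The first step is to read off $s_n$ explicitly from the Bellaterra rules: an $a$-labeled edge in $B_n$ lifts to a crossed pair in $B_{n+1}$ and so receives sign $-1$, while $b$- and $c$-labeled edges lift uncrossed and receive sign $+1$. Thus $s_n$ is the pullback to $E(B_n)$ of the indicator of label $a$. The next step is to exploit self-similarity: each vertex of $B_n$ is a binary word of length $n$, and writing $A(B_n)$ and $A^{s_n}$ in block form indexed by the last letter, the Bellaterra rules yield explicit recursive formulas expressing $A(B_{n+1})$ and $A^{s_{n+1}}$ in terms of $A(B_n)$, $A^{s_n}$, and the permutation swapping the two blocks. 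The goal is to extract from this a recursive estimate $\rho(A^{s_{n+1}}) \le f\bigl(\rho(A^{s_n})\bigr)$ whose iterates remain uniformly bounded strictly below $3$.

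The heart of the matter, and the principal obstacle, is proving such a uniform bound $\rho(A^{s_n}) \le \rho < 3$. Three attacks seem worth pursuing. First, a trace method: estimate $\operatorname{tr}\bigl((A^{s_n})^{2k}\bigr)$ as a signed sum over closed walks of length $2k$ in $B_n$, and use bireversibility of the Bellaterra automaton to pair off walks whose sign products are $+1$ and $-1$. Second, a representation-theoretic approach: since $B_n$ is the Schreier graph of the Bellaterra self-similar group acting on the $n$-th level of the binary tree, $A^{s_n}$ is naturally the Laplacian of an equivariant line bundle, and a uniform spectral gap on such bundles is a weak form of property~$(\tau)$ that may be accessible even though property~$(T)$ is known to fail for this group. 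Third, an interlacing-family argument in the spirit of Marcus--Spielman--Srivastava, adapted to the highly structured but non-random signing $s_n$.

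The difficulty of the last step is precisely the general open problem of derandomizing Bilu--Linial; the authors' $O(n^2)$ diameter bound is consistent with expansion but still falls short of the $O(n)$ bound that expansion would imply, so substantively new ideas combining the self-similar automaton structure with a delicate spectral argument appear to be required. By the remark referenced in the footnote, a solution along these lines would simultaneously resolve Conjecture~\ref{conj:aleshin_expanders}.
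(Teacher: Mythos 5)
The statement you are addressing is a conjecture: the paper offers no proof of it, only numerical evidence (an apparent eigenvalue gap near $0.05$ in Figure~\ref{figure:ab_gaps}) and the authors explicitly say their tools are too weak to prove it. Your proposal is likewise not a proof. Its framing is sound --- the Bilu--Linial decomposition does say that the spectrum of $B_{n+1}$ is the union of the spectrum of $B_n$ and that of the signed matrix $A^{s_n}$, and you correctly identify the signing as $-1$ on $a$-labeled edges and $+1$ on $b$- and $c$-labeled edges --- so the conjecture does reduce to a uniform bound $\rho(A^{s_n}) \le \rho < 3$. But that bound is exactly the entire content of the conjecture, and you do not prove it; you list three possible attacks (trace method, property~$(\tau)$ for an equivariant line bundle, interlacing families) without carrying any of them out, and you acknowledge that the last step ``is precisely the general open problem of derandomizing Bilu--Linial.'' A reduction of an open problem to an equivalent open problem, plus a menu of untried strategies, is a research program, not a proof.

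Two further cautions if you pursue this. First, the signing is not self-replicating in the naive sense: the $a$-labeled edges of $B_{n+1}$ are not lifts of $a$-labeled edges of $B_n$ but arise from one copy each of the lifted $b$- and $c$-edges, so any recursion for $A^{s_{n+1}}$ must track all three label classes of $B_n$ simultaneously (a $2\times 2$ block recursion in the last letter involving three labeled adjacency matrices), not a single scalar estimate $\rho(A^{s_{n+1}}) \le f(\rho(A^{s_n}))$. Second, the trace-method and interlacing approaches as stated give bounds for \emph{random} or \emph{optimally chosen} signings; the Bellaterra signing is a single deterministic one, and the paper's Section~\ref{section:examples} shows that the structural hypotheses it can verify (cyclicity, cotransitivity, polynomial diameter) are compatible with non-expansion, so whatever argument you find must use something genuinely specific to this automaton, e.g.\ its bireversibility or the fact that $G_\cb \cong C_2 * C_2 * C_2$.
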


If so, they are a strongly explicit derandomization of the probabilistic construction in~\cite{random_lifts_expansion}. One consequence of being an expander family is logarithmic diameter growth with respect to the size of the graph, so if Conjecture~\ref{conj:aleshin_expanders} holds then $\diam(A_n)$ grows linearly in~$n$,  stronger claim than in the theorem.

Unfortunately, we are not near proving either conjectures and in fact our tools are too weak to prove them.  Later in the paper, we state and prove general conditions on an automaton which guarantee polynomial diameter growth in the associated graphs (Section~\ref{section:general}).  We then prove that for some automata which satisfy those conditions, we do not get expanders (see Section~\ref{section:examples}).  In other words, a different, perhaps combinatorial technique is needed to prove the expansion.

%-------------------------------------------------------------------------------

\section{Mealy automata}\label{section:mealy_automata}

The Bellaterra graphs $\{B_n\}_{n=1}^\infty$ are \emph{very explicit} in the sense of~\cite{expander_survey}.\footnote{Sometimes, these are called \emph{fully explicit}, see e.g.~\cite{Vad}.} That is, there is a polynomial time algorithm which, given a number $n$ and a vertex $v \in B_n$, lists the neighbors of~$v$ in~$B_n$. It takes $n$ bits to describe a vertex in $B_n$, so the runtime of the algorithm should be polynomial in~$n$.

In fact, there is a linear time algorithm. Even more strongly, the computation can be implemented with a Mealy automaton, i.e., a finite state automaton which outputs a letter each time it reads a letter.

\begin{Defn}
A \emph{Mealy automaton} $\cm = (Q,A,\tau,\sigma)$ is a pair of finite sets $Q$, $A$, together with functions $\sigma:Q \times A \ra A$, and $\tau:Q \times A \ra Q$.
\end{Defn}
The sets $Q$ and $A$ are called the \emph{states} and \emph{alphabet}, respectively. The functions $\sigma$ and $\tau$ are called the \emph{output} and \emph{transition} functions, respectively. When $\abs{Q} = q$ and $\abs{A} = a$, we call $\cm$ a $(q,a)$-automaton. We adopt the following notations:
\begin{align*}
\act{q}{x} = \sigma_q(x) = \sigma(q,x) \\
q ^x = \tau_x(q) = \tau(q,x).
\end{align*}

Let $A^*$ and $A^\infty$ denote the set of finite and infinite words in the alphabet $A$, respectively, and let $\words{A} = A^* \cup A^\infty$ denote the set of all words in $A$. A Mealy automaton in the state $q \in Q$ acts in a length-preserving way on words in $\words{A}$ by reading the first letter $x$, outputting the letter $\sigma(q,x)$, and acting on the rest of the word from the state $\tau(q,x)$. That is, each $q \in Q$ has a corresponding length-preserving map $\words{A} \ra \words{A}$ defined recursively by
\begin{align*}
\act{q}{(x_0 x_1 \dots x_n)} &= y_0 \act{r}{(x_1 \dots x_n)}, \\
\and
\act{q}{(x_0 x_1 x_2 \dots)} &= y_0 \act{r}{(x_1 x_2 \dots)},
\end{align*}
where $y_0 = \sigma(q,x_0)$ and $r = \tau(q,x_0)$. This extends to a left action of finite words $Q^*$ on words in $\words A$ via, e.g.,
\begin{align*}
\act{qr}{s} = \act{q}{(\act{r}{s})}.
\end{align*}

So we've defined an extension of $\sigma: Q \times A \ra A$ to a map $\sigmaw: Q^* \times \words A \ra \words A$ given by
\begin{align*}
\sigmaw(w,s) = \sigmaw_w(s) &= \act{w}{s}.
\end{align*}

A Mealy automaton can be depicted with a Moore diagram: a directed graph with a vertex for each state $q \in Q$ and a labeled edge
$$
q \edgetol{x \ft y} r
$$
for every $q \in Q$ and every $x \in A$, where $y = \sigma(q,x)$ and $r = \tau(q,x)$. That is, an edge $q \edgetol{x \ft y} r$ denotes that if the Mealy automaton is in state $q$ and reads the letter $x$, then it outputs the letter $y$ and transitions to the state $r$. We will sometimes simply write
$q \edgetol*{x \ft y} r$ to denote that $y = \sigma(q,x)$ and $r = \tau(q,x)$.

\begin{figure}
\captionsetup[subfloat]{labelformat=empty}
\subfloat{
\begin{tikzpicture}

	\node at (0:0cm) [state] (c) {$c$};
	\node at (0:3cm) [state] (a) {$a$};
	\node at (60:3cm) [state] (b) {$b$};
	
	\draw
		(a) edge[transition, bend right, /from to = {0}{0}{right}] (b) 
		(b) edge[transition, bend right, /from to = {0}{0}{left}] (c) 
		(c) edge[transition, bend right, /from to = {0}{1}{above}, /from to = {1}{0}{below}] (a) 
		
		(a) edge[transition, bend right, /from to = {1}{1}{above}] (c) 
		(b) edge[transition, /from to = {1}{1}{above}, /wide loop at = 90, min distance = 1cm] (b)
	;
\end{tikzpicture}
}
\hspace{10em}
\subfloat{
\begin{tikzpicture}[]

	\node at (0,2) [state] (0) {$0$};
	\node at (0,0) [state] (1) {$1$};
	
	\draw
		(0) edge[transition, /wide loop at = 90, /from to = {a}{b}{near end, above left},/from to = {b}{c}{near start, above right}] (0) 
		(0) edge[transition, bend right, /from to = {c}{a}{left},] (1) 
		(1) edge[transition, bend right, /from to = {c}{a}{right},] (0)
		(1) edge[transition, /wide loop at = -90, /from to = {b}{b}{near start, below left}, /from to = {a}{c}{near end, below right}] (1)
	;
\end{tikzpicture}
}
\caption{The Bellaterra automaton $\cb$ and its dual $\ol \cb$}
\label{figure:bellaterra_automaton}
\end{figure}

\begin{Example}\label{example:bellaterra_automaton}
Consider the \emph{Bellaterra automaton} $\cb$ pictured in Figure~\ref{figure:bellaterra_automaton}. More formally, $\cb = (Q, A, \tau, \sigma)$ is defined by
\begin{align*}
&A = \{0, 1\}, \quad Q = \{a,b,c\}
\\
&\sigma_{a} = \sigma_b = (0) (1), \quad \sigma_c = (0~1),
\\
\text{and } &\tau_0 = (a~b~c), \quad \tau_1 = (a~c) (b),
\end{align*}
where we use the usual cycle notation for permutations, so e.g., $\tau_0(a) = b$, $\tau_0(b) = c$, $\tau_0(c) = a$.

Then given a number $n$, the Bellaterra graph $B_n$ can be described as the graph whose vertices are length $n$ binary strings, with an edge
\begin{align*}
s \edgetol{q} \left(\act{q}{s}\right)
\end{align*}
for each vertex $s \in A^n$ and each state $q \in Q$. For example, we have
\begin{gather*}
\act{c}{(0000)}
= 1\act{a}{(000)}
= 10\act{b}{(00)}
= 100\act{c}{(0)}
= 1001,
\\
\text{so}\quad
0000 \edgetoc 1001.
\end{gather*}
\end{Example}

Some symmetry between states and letters of a Mealy automaton is already apparent in the definition. The nature of this symmetry becomes more clear if we consider computing compositions of maps associated to the states of an automaton, we have, e.g.,
\begin{align*}
\act{q_1q_0}(x_0 x_1 \dots x_n)
= \act{q_1} (y_0 \act{r_0} (x_1 \dots x_n))
= z_0 \act{r_1 r_0} (x_1 \dots x_n) = \dots,
\end{align*}
where $q_0 \edgetol*{x_0 \ft y_0} r_0$, and $q_1 \edgetol*{y_0 \ft z_0} r_1$. %where $b_0 = \sigma(q_1,a_0)$, $r_1 = \tau(q_1, a_0)$, $c_0 = \sigma(q_0,b_0)$, and $r_1 = \tau(q_1, b_0)$.
The computation proceeds by taking any instance of $\act{q}(x \dots)$ in the expression, and replacing it with $y \act{r}{(\dots)}$, where $q \edgetol*{x \ft y} r$.

If we ignore parentheses, states in $Q$ and letters in $A$ play a symmetric role in this process, except that letters in $Q$ are written higher and disappear when they are at the right side of the expression. Taking this symmetry into account, the automaton also naturally defines an action of the letters in $A$ on finite words in $Q^*$:
\begin{align*}
(q_n \dots q_1 q_0)^x = (q_n \dots q_1)^y r_0,
\end{align*}
where $q_0 \edgetol*{x \ft y} r_0$. Letters in $A$ also act on the set of \emph{left}-infinite words in the alphabet $Q$:
\begin{align*}
(\dots q_2 q_1 q_0)^x = (\dots q_2 q_1)^y r_0.
\end{align*}
We let $Q^{-\infty}$ denote this set of left-infinite words, and let $\rwords Q$ denote $Q^* \cup Q^{-\infty}$, so we have an action of $A$ on $\rwords Q$.
This naturally extends to a \emph{right} action of $A^*$ on $\rwords Q$, via, e.g.
$$
w^{xy} = (w^x)^y.
$$
So we have defined a map $\tauw: \rwords Q \times A^* \ra \rwords Q$, given by
\begin{align*}
\tauw(w,s) = \tauw_s(w) &= {w}^{s}.
\end{align*}
It is straightforward to check that for any $s \in A^*, t \in \words A, w \in Q^*, v \in \rwords Q$, the actions we have defined satisfy the following relations:
\begin{align*}
\act{w}{(st)} &= \wt s \, \act{\wt w}{(t)},
\rlap\and \\
(vw)^s &= (v)^{\wt s} \, \wt w,
\end{align*}
where $\wt s = \act{w}{s}$ and $\wt w = w^s$.

If we need to specify the automaton, we will write $\sigmaw_\cm = \sigmaw$ and $\sigmaw_{\cm, w} = \sigmaw_w$, and similarly for $\tauw$.

With this symmetry in mind, it is sensible to define the \emph{dual} of an automaton $\cm = (Q,A,\tau,\sigma)$ to be the automaton $\ol \cm = (\wh Q, \wh A, \wh \tau, \wh \sigma)$ given by interchanging the roles of the states and alphabet. That is, we take
\begin{align*}
\wh A = Q, \quad \wh Q = A, \quad \wh \sigma(a,q) = \tau(q,a), \quad \text{and}  \quad \wh \tau(a,q) = \sigma(q,a).
\end{align*}
In other words, for $q, r \in Q$ and $x,y \in A$, we have $x \edgetol*{q \ft r} y$ in $\ol \cm$ if and only if $q \edgetol*{x \ft y} r$ in $\cm$.

Computations in the dual automaton are computations in the original automaton, with each step written backwards. It follows that, e.g., for every $s \in A^*$ and $w \in Q^*$ we have
$$
{\sigmaw_{\ol \cm}(s, w)} = \ol{\tauw_{\cm}(\ol w,\ol s)},
$$
where $\ol u$ denotes the reversal of $u$.

\begin{Example}
The dual of the Bellaterra automaton is also pictured in Figure~\ref{figure:bellaterra_automaton}.
\end{Example}

\begin{Example}
Let $A = \{0, 1\}$. Consider the Mealy automaton pictured in Figure~\ref{figure:adding_automaton}.
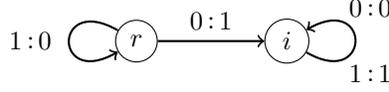
\begin{figure}
\begin{tikzpicture}[]

	\node at (0,0) [state] (r) {$r$};
	\node at (2,0) [state] (i) {$i$};
	
	\draw
		(r) edge[transition, /wide loop at = 180, /from to = {1}{0}{left}] (r)
		(r) edge[transition, /from to = {0}{1}{above},] (i)
		(i) edge[transition, /wide loop at = 0, /from to = {0}{0}{near end, above right}, /from to = {1}{1}{near start, below right}] (i)
	;
\end{tikzpicture}
\caption{The binary adding machine}
\label{figure:adding_automaton}
\end{figure}
The map $\sigmaw_r:\{0,1\}^* \ra \{0,1\}^*$ is simply addition of $1$, where length $n$ words in $A^*$ are interpreted as binary representations of numbers modulo $2^n$, with the least significant digit on the left.
\end{Example}

We say a Mealy automaton is \emph{invertible} if $\sigma_q$ is invertible for every $q \in Q$. This occurs if and only if the endomorphism $\sigmaw_w:A^* \ra A^*$ is invertible for every $w \in Q^*$. We are primarily interested in invertible automata, though our results can be generalized to the non-invertible case.

The \emph{inverse} of an invertible automaton $\cm = (Q,A,\tau,\sigma)$ is the automaton $\cm^{-1} = (Q',a,\tau',\sigma')$ given by
\begin{align*}
Q = \setc{q'}{q \in Q}, \quad \sigma'_{q'} = \sigma_q^{-1}, \quad \tau'(q',a) = \tau(q,\sigma_{q}^{-1}(a)).
\end{align*}
It is straightforward to check that $\sigmaw_{\cm^{-1}, q'} = \sigmaw_{\cm, q}^{-1}$ for every $q \in Q$.

Consider two automata $\cm = (Q,A,\tau,\sigma)$, $\cm' = (Q',A,\tau',\sigma')$ acting on the same alphabet, with $Q, Q'$ disjoint. Their \emph{union} is the automaton $\cm \cup \cm' = (Q \cup Q', A, \tau'',\sigma'')$, where
\begin{gather*}
\tau'' (q,a) = \begin{cases}
\tau(q,a) & q \in Q \\
\tau'(q,a) & q \in Q' \\
\end{cases}
\qquad
\and
\qquad
\sigma'' (q,a) = \begin{cases}
\sigma(q,a) & q \in Q \\
\sigma'(q,a) & q \in Q' \\
\end{cases}
\end{gather*}
For example, $\cm \cup \cm^{-1}$ is an automaton with twice as many states as $Q$, in which every state $q$ has an inverse state $q'$ with $\sigmaw_{q'} = \sigmaw_q^{-1}$.

We say an automaton is \emph{reversible} if its dual is invertible.

We say an automaton is \emph{bireversible} if it is invertible, reversible, and its inverse is reversible. Note that the last condition does not follow from the other two. For example, the three-state automaton in Figure~\ref{figure:23_automaton} is reversible and invertible, but not bireversible.

%-------------------------------------------------------------------------------

\section{Schreier graphs}\label{section:schreier_graphs}

For our purposes, graphs are locally finite, directed, and may have self-loops and repeated edges. A graph is \emph{regular} if the indegree and outdegree are the same across all vertices.

Let $\Gamma$ be a graph. Given vertices $v, w \in \Gamma$, we write $v \edgeto_\Gamma w$ if there is an edge in $\Gamma$ from $v$ to $w$. We write $d_\Gamma(v,w)$ for the distance between $v$ and $w$, i.e.~the length of the shortest undirected path between $v$ and $w$. When there is no such path, we take $d_\Gamma(v,w) = \infty$. Given a nonnegative integer $r$, the \emph{ball} of radius $r$ centered at $v$ is the set
$$
B_\Gamma(v,r) = \{w \in \Gamma : d(v,w) \leq r\}.
$$
The \emph{diameter} of $\Gamma$ is defined to be
$$
\diam(\Gamma) = \max_{v,w \in \Gamma} d_\Gamma(v,w).
$$
When it is clear from context what graph we are discussing, we will drop the subscripts and simply write $v \edgeto w$, $d(v,w)$, and $B(v,r)$.

In Example~\ref{example:bellaterra_automaton} we described the Bellaterra graphs in terms of a Mealy automaton. In the same way, we can associate a sequence of graphs to any Mealy automaton. Since we are primarily concerned with regular graphs, we require the automaton to be invertible.

\begin{Defn}
Let $\cm = (Q, A, \sigma, \tau)$ be an invertible Mealy automaton. Given $n \in \{1, 2, \dots\} \cup \{\infty\}$, the \emph{Schreier graph} $\Gamma_{\cm, n}$ is a directed graph, defined as follows: The vertices of $\Gamma_{\cm, n}$ are length $n$ words in $\words A$, i.e.~elements of $A^n$. For each vertex $s \in \Gamma_{\cm, n}$ and each state $q \in Q$, the Schreier graph $\Gamma_{\cm, n}$ has an edge $$s \edgeto \act{q}{s}.$$
\end{Defn}

Clearly, the number of edges leaving a vertex is $\abs{Q}$. The Schreier graph of the inverse automaton, $\Gamma_{\cm^{-1},n}$, is simply $\Gamma_{\cm,n}$ with the edges reversed. So, the number of edges entering a given vertex in $\Gamma_{\cm^{-1},n}$ is also $\abs{Q}$, and $\Gamma_{\cm,n}$ is regular.

\begin{Example}
The $n$-th Bellaterra graph $B_n$ is the Schreier graph $\Gamma_{\cb,n}$, where $\cb$ is the Bellaterra automaton, pictured in Figure~\ref{figure:bellaterra_automaton}.
\end{Example}

\begin{Example}
The $n$-th Aleshin graph $A_n$ is the Schreier graph $\Gamma_{\ca,n}$, where $\ca$ is the Aleshin automaton, first considered in \cite{aleshin}, pictured in Figure~\ref{figure:aleshin_automaton}.
\end{Example}

\begin{figure}
\begin{tikzpicture}

	\node at (0:0cm) [state] (c) {$c$};
	\node at (0:3cm) [state] (a) {$a$};
	\node at (60:3cm) [state] (b) {$b$};
	
	\draw
		(a) edge[transition, bend right, /from to = {0}{1}{right}] (b) 
		(b) edge[transition, bend right, /from to = {0}{1}{left}] (c) 
		(c) edge[transition, bend right, /from to = {0}{0}{above}, /from to = {1}{1}{below}] (a) 
		
		(a) edge[transition, bend right, /from to = {1}{0}{above}] (c) 
		(b) edge[transition, /from to = {1}{0}{above}, /wide loop at = 90, min distance = 1cm] (b)
	;
\end{tikzpicture}

\caption{The Aleshin automaton $\ca$}
\label{figure:aleshin_automaton}
\end{figure}

%-------------------------------------------------------------------------------

\section{Automaton groups}\label{section:automata_groups}

Let $\cm = (Q,A,\tau,\sigma)$ be an invertible Mealy automaton. As seen in Section~\ref{section:mealy_automata}, we have invertible maps $\sigmaw_q : \words A \ra \words A$ for each $q \in Q$. This gives an action of the free group $F_Q$ on $\words A$. We can extend the definition of $\sigmaw$ as follows: For $w \in F_Q$, we can define $\sigmaw_w$ in the natural way, e.g.,
$$\sigmaw_{qr^{-1}} = \sigmaw_{q} \, \sigmaw_{r}^{-1}.$$
As usual, we will adopt the notational convention
$$
\sigmaw_w(s) = \sigmaw(w,s) = \act{w}{s}.
$$

The \emph{automaton group} associated to $\cm$ is the group $G_\cm$ generated by the automorphisms $\sigmaw_q$.

For example, letting $\ca$ denote the Aleshin automaton, it was shown in \cite{VV} that $\sigmaw_{\ca, a}$, $\sigmaw_{\ca, b}$, and $\sigmaw_{\ca, c}$ satisfy no nontrivial relation, so $G_\ca$ is the free group $F_3$. However, it is straightforward to check that $\sigmaw_{\cb, a} ^ 2 = \sigmaw_{\cb,b} ^ 2 = \sigmaw_{\cb,c} ^ 2 = \id$, where $\cb$ is the Bellaterra automaton. It can be shown (see, e.g., \cite{Nek, 3_state_automata}) that the words satisfy no other relation, so we say $G_\cb \cong \gen{a, b, c \mid a^2, b^2, c^2} = C_2 * C_2 * C_2$.

Information about $G_\cm$ as an abstract group can be used to obtain information about the Schreier graphs $\Gamma_{\cm,n}$. See Remarks~\ref{remark:property_t_expanders}~and~\ref{remark:amenable_nonexpanders}.

%-------------------------------------------------------------------------------

\section{Trees and automorphisms}\label{section:trees}

In this context it is natural to think of the set of finite words $A^*$ as vertices in a regular rooted tree, where the empty word is the root and the children of the word $s$ are the words $sx$ for $x \in A$. We will need to talk about rooted trees more generally, so we make the following definitions.

\begin{Defn}
A \emph{rooted tree} (or simply \emph{tree}) is a graph $\T$ with a distinguished root vertex $r \in \T$ such that for each $v \in \T$ there is exactly one directed path from $r$ to $v$. The \emph{level} of $v$, denoted $\len(v)$ is the length of this path. The \emph{$n$-th level} of $\T$, denoted $\T_n$ is the set of all vertices $v \in \T$ such that $\len(v) = n$. A \emph{subtree} of $\T$ is a subgraph containing $r$ which is itself a rooted tree. A \emph{tree isomorphism} between two trees $\S$ and $\T$ is a graph isomorphism which sends the root of $\S$ to the root of $\T$. An \emph{automorphism} of $\T$ is an isomorphism from $\T$ to $\T$. The automorphisms of $\T$ form a group, and we denote it $\Aut(\T)$.
\end{Defn}

Then, given a Mealy automaton $\cm = (Q,A,\sigma,\tau)$, for any $q \in Q$ the map $\sigmaw_q : A^* \ra A_*$ is a tree automorphism. That is, $\sigmaw_q$ is a bijection which fixes the empty word, and sends children of $x$ to children of $\sigmaw_q(x)$. In other words, for every $s \in A_*$ and $x \in A$ there is some $y \in A$ such that
$$
\sigmaw_q(sx) = \sigmaw_q(s)y.
$$
Of course, it follows that for any $w \in Q^*$, the map $\sigmaw_w:A^* \ra A_*$ is a composition of tree automorphisms and is itself a tree automorphism.

Infinite words, i.e., elements of $A^\infty$, can be thought of as rays in the tree $A^*$, and $\sigmaw_w$ acts on them in the natural way.

Note that in order to think of $\tau_a: Q^* \ra Q^*$ as a tree automorphism, we must think of $Q^*$ as a tree in the reverse way, i.e.~the children of $w$ are of the form $qw$ for $q \in Q$, rather than of the form $wq$.

Given a tree automorphism $g: A^* \ra A^*$ and a word $s \in A^*$, the \emph{section} of $g$ at $s$, is the tree automorphism $\sec{g}{s}: A^* \ra A^*$ defined by
$$
g(st) = g(s) \sec{g}{s}(t).
$$
Note that we are using a canonical identification between branches of the tree $A^*$. There need not be such an identification in a general tree, so this definition of sections is specific to trees of words.

We call a tree automorphism $\alpha:A^* \ra A^*$ \emph{automatic} if it $\alpha = \sigmaw_{\cm, q}$ for state $q$ of some Mealy automaton $\cm$. Equivalently, $\alpha$ is automatic if and only if it has finitely many sections. The set of automatic automorphisms forms a subgroup $\aAut(A^*) < \Aut(A^*)$.

An automorphism $g: A^* \ra A^*$ is determined by its action on the first level, $(A^*)_1 = A^1 = A$, and its sections $\sec{g}{x}$ at all $x \in A$. If $\rho: A \ra A$ is a permutation, then for notational convenience we can extend $\rho$ as an automorphism $A^* \ra A^*$ via
$$
\rho(xs) = \rho(x)s.
$$
If $A$ is equipped with an ordering of its elements, say, $A = \{x_1, \dots, x_k\}$, then we write
$$
(g_1, \dots, g_k)
$$
for the automorphism $g:A_* \ra A_*$ which acts trivially on $A$, and for which $\sec{g}{x_i} = g_i$ for all $i$. Then every automorphism can be uniquely decomposed into
$$
g = \rho \, (g_1, \dots, g_k),
$$
for some permutation $\rho:A \ra A$ and some automorphisms $g_i: A^* \ra A^*$. Specifically, $\rho$ is the restriction of $g$ to $A$, and $g_i = \sec{g}{x_i}$. Then, given an invertible Mealy automaton $\cm = (Q,A,\tau,\sigma)$, the definition of the automorphisms $\sigmaw_q$ can be phrased recursively as
$$
\sigmaw_q = \sigma_q \, (\sigmaw_{q_1}, \dots, \sigmaw_{q_k}),
$$
where $q_i = q^{x_i}$. Such a recursive definition is called a \emph{wreath recursion.} For example, if $\cb = (Q,A,\tau,\sigma)$ is the Bellaterra automaton, then we have the wreath recursion
\begin{align*}
\sigmaw_a &= (\sigmaw_b, \sigmaw_c) \\
\sigmaw_b &= (\sigmaw_c, \sigmaw_b) \\
\sigmaw_c &= \rho (\sigmaw_a, \sigmaw_a),
\end{align*}
where $\rho : \{0,1\} \ra \{0,1\} $ swaps $0$ and $1$.

\begin{Defn}
Let $\T$ be a rooted tree. We say a tree automorphism $g:\T \ra \T$ is \emph{spherically transitive} (or just \emph{transitive}) if its restriction to every level of $\T$ is a transitive map.
\end{Defn}

For example, if $\cm$ is the adding automaton pictured in Figure~\ref{figure:adding_automaton}, then $\sigma_r:\{0,1\}^* \ra \{0,1\}^*$ is spherically transitive, because its action on the $n$-th level is addition of $1$ modulo~$2^n$.

%-------------------------------------------------------------------------------

\section{The Bellaterra automaton}\label{section:bellaterra}

Consider the Bellaterra automaton $\cb = (Q, A, \sigma, \tau)$, pictured in Figure~\ref{figure:bellaterra_automaton}. We want to show that the graphs $B_n = \Gamma_{\cb,n}$ have small diameter. Our approach is to find short words in $Q^*$ which change only the last digit of the word $1^n = 11\dots1$. So, we are looking for words which do not fix the infinite word $1^\infty = 111\dots$, but do preserve the first $n$ of its letters. It turns out there are enough of these words because $\tauw_1$ acts ``transitively enough'' on $Q^*$, so that almost every orbit under its action contains some word $w \in Q^*$ which swaps $0$ and $1$.

It is straightforward to check that $a^2$, $b^2$, and $c^2$ act trivially on $A^*$, (i.e.~$\sigmaw_{aa} = \sigmaw_{bb} = \sigmaw_{cc} = \id$) so we are primarily interested in \emph{reduced words} in $\{a,b,c\}$, i.e.~those which do not repeat the same letter twice in a row. Note that these words form a subtree of $Q^*$, which is nearly a binary tree: every vertex has two children, except the root.

We will need a simple result on the transitivity of automorphisms of a binary tree. $A = \{0,1\}$. Define a group homomorphism $\chi:\Aut(A^*) \ra \zz_2[[t]]$,  by
$$\chi(g) = \sum_{n=1}^\infty c_n t^{n-1},$$
where $(-1)^{c_n}$ is the sign of the permutation given by the action of $\chi$ the $n$-th level of $A^*$. Values of this homomorphism can be computed recursively via
$$
\chi(g) = c_1 + t\big(\chi(\sec{g}{x}) + \chi(\sec{g}{y})\big),
$$
where $c_1$ is $0$ if $g$ fixes the two elements of $A$, and $c_1 = 1$ if $g$ swaps them. We call $\chi(g)$ the \emph{characteristic function of $g$}. Of course, this definition makes sense when $A$ is any two-element set, so we will state the lemma more generally:

\begin{lemma}\label{lemma:binary_transitive}
Let $A = \{x,y\}$. An automorphism $g \in \Aut(A^*)$ is spherically transitive if and only if $\chi(g) = 1/(1-t)$.
\end{lemma}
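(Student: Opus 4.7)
The plan is to reinterpret the hypothesis. Since $1/(1-t) = 1 + t + t^2 + \cdots$, the equation $\chi(g) = 1/(1-t)$ is equivalent to $c_n = 1$ for every $n \geq 1$, i.e., $g$ acts as an odd permutation on each level of $A^*$. The lemma therefore reduces to showing: $g$ is spherically transitive if and only if its restriction to each level is an odd permutation.

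The forward direction is immediate from the sign of a cycle. If $g$ is spherically transitive, then its restriction to level $n$ is a single cycle of length $2^n$, which has sign $(-1)^{2^n - 1} = -1$ for every $n \geq 1$, so $c_n = 1$ and thus $\chi(g) = 1/(1-t)$.

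For the reverse direction, I would induct on $n$ to show that if $c_n = 1$ for all $n$, then $g$ acts on level $n$ as a single $2^n$-cycle. The base case $n=1$ is automatic: $c_1 = 1$ means $g$ swaps the two letters of $A$. For the inductive step, assume the action on level $n$ is a single $2^n$-cycle with orbit $v_0 \mapsto v_1 \mapsto \cdots \mapsto v_{2^n - 1} \mapsto v_0$. Since $g$ is a tree automorphism, it sends the two children of $v_i$ bijectively onto the two children of $v_{i+1 \pmod{2^n}}$. It follows that the $g$-orbit of any child of $v_0$ has length either $2^n$ (if $g^{2^n}$ fixes both children of $v_0$) or $2^{n+1}$ (if $g^{2^n}$ swaps them). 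In the first case, the action on level $n+1$ splits as a product of two $2^n$-cycles, which has sign $+1$; in the second case it is a single $2^{n+1}$-cycle, which has sign $-1$. The hypothesis $c_{n+1} = 1$ rules out the first case, so $g$ is transitive on level $n+1$.

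The main obstacle is the dichotomy invoked in the inductive step: one must argue that the block of $2^{n+1}$ vertices sitting above a $2^n$-cycle must either split into two cycles of length $2^n$ or fuse into one cycle of length $2^{n+1}$, with no other possibility. This is what allows the parity computation to force the correct lift. The dichotomy follows from the observation that $g^{2^n}$ stabilizes $v_0$ and so induces a permutation of its two children, and the tree-automorphism property of $g$ transports that single 2-element permutation uniformly around the cycle; once this is made precise, the rest of the argument is just bookkeeping of signs.
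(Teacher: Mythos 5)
Your proof is correct and follows essentially the same route as the paper: the forward direction via the sign of a $2^n$-cycle, and the reverse direction by induction on levels, using the dichotomy that the lift of a $2^n$-cycle to the next level is either two $2^n$-cycles (even) or one $2^{n+1}$-cycle (odd), with the parity hypothesis forcing the latter. Your justification of the dichotomy via $g^{2^n}$ stabilizing $v_0$ and permuting its children is the same observation the paper makes, stated slightly more explicitly.
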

\begin{proof}
If $g$ is spherically transitive, then its action on the $n$-th level of $A^n$ is a $(2^n)$-cycle, which is an odd permutation for all $n \geq 1$. Hence, $c_n = 1$ for all $n \geq 1$, and
$$
\chi(g) = \sum_{n=1}^\infty t^{n-1} = \frac 1 {1-t}\,.
$$

In the other direction, suppose
$\chi(g) = 1/(1-t)$, i.e.~$g$ acts as an odd permutation on $A^n$ for every $n \geq 1$. We will show by induction on $n$ that the action of $g$ on $A^n$ is a $(2^n)$-cycle for all $n \geq 0$. This is trivial for $n = 0$.

For the inductive step, suppose $g$ acts as a $(2^n)$-cycle on $A^n$. Given a word $s \in A^n$, we either have $g^{2^n}(sx) = sx$ or $g^{2^n}(sx) = sy$. In the first case $sx$ belongs to a $(2^n)$-cycle of $g$, in the second case, $sx$ belongs to a $(2^{n+1})$-cycle. So, any word in $A^{n+1}$ ending in $x$ belongs to either a $(2^n)$-cycle or a $(2^{n+1})$-cycle, and similarly for wards ending in $y$. So, the action of $g$ on $A^{n+1}$ decomposes into either two $(2^n)$-cycles or a single $(2^{n+1})$-cycle. But the former is an even permutation, so $g$ must act as a $(2^{n+1})$-cycle on $A^{n+1}$, as desired.
\end{proof}

\begin{lemma}\label{lemma:bellaterra_transitive}
Let $\cb = (Q,A,\tau,\sigma)$ denote the Bellaterra automaton. Then for every natural number $n$, the map $\tauw_{\cb, 1}$ acts transitively on the set of reduced words of length $n$ ending with $a$ or $c$.
\end{lemma}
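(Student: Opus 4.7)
The plan is to realize $\tauw_1$ as a spherically transitive automorphism of a rooted binary tree and then invoke Lemma~\ref{lemma:binary_transitive}.

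First, I would check that the set $W := \{\epsilon\} \cup \bigcup_{n \ge 1} W_n$ of reduced words ending in $a$ or $c$ is a rooted binary tree: the parent of $w = q_{n-1}\cdots q_0$ is $\hat w = q_{n-2}\cdots q_0$, and each $\hat w$ has exactly two children, obtained by prepending either of the two letters of $Q$ distinct from the first letter of $\hat w$ (the root $\epsilon$ has children $a$ and $c$). Using the identity $\tauw_1(q\hat w) = r\,\tauw_1(\hat w)$, where $r = \tau(q, s(\hat w))$ and $s(\hat w) := 1 \oplus \#_c(\hat w) \bmod 2$ is the state reached by right-to-left processing of $\hat w$ starting at $1$, together with a short case check on consecutive pairs $(q_i, q_{i+1})$, one verifies that $\tauw_1$ preserves both $W$ and its tree structure. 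Fixing any binary-tree isomorphism $W \cong \{0,1\}^*$ (labelling the two children of each node alphabetically works), Lemma~\ref{lemma:binary_transitive} reduces the claim to showing $\chi(\tauw_1) = 1/(1-t)$, i.e.\ that $\tauw_1|_{W_n}$ is an odd permutation for every $n \ge 1$.

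For the sign computation, iterating the recursion $\chi(g) = c_1 + t(\chi(\sec{g}{0}) + \chi(\sec{g}{1}))$ shows that $c_n$ is congruent modulo $2$ to the number of nodes $\hat w \in W_{n-1}$ at which $\tauw_1$ \emph{swaps} (rather than fixes) the alphabetical labels of the two children. A direct case analysis on the first letter of $\hat w$ and on $s(\hat w) \in \{0,1\}$ shows that $\tauw_1$ acts as the identity on the children of $\hat w$ precisely when $\hat w$ begins with $c$ and $\#_c(\hat w)$ is odd, and as a swap in every other case. Since $|W_{n-1}| = 2^{n-1}$ is even for $n \ge 2$, the sign of $\tauw_1|_{W_n}$ then equals $(-1)^{N_n}$, where $N_n$ counts these exceptional nodes in $W_{n-1}$; the case $n = 1$ is immediate, as $\tauw_1$ swaps $a$ with $c$.

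The main step, and what I expect to be the main obstacle, is to show $N_n$ is odd for every $n \ge 2$. Stripping the leading $c$ identifies $N_n$ with the count $V_{n-2}$ of reduced length-$(n-2)$ words that begin with $a$ or $b$, end in $a$ or $c$, and contain an even number of $c$'s. Splitting $V_k = A_k + B_k$ by the first letter, removing it, and introducing the analogous count $C_k$ for words beginning with $c$, one obtains
\begin{equation*}
A_k = B_{k-1} + C_{k-1}, \quad B_k = A_{k-1} + C_{k-1}, \quad\text{and hence}\quad V_k = V_{k-1} + 2 C_{k-1} \quad (k \ge 2).
\end{equation*}
Modulo $2$ this collapses to $V_k \equiv V_{k-1} \pmod 2$ for $k \ge 2$; combined with the base values $V_0 = V_1 = 1$ (checked directly), we conclude that $V_k$ is odd for every $k \ge 0$. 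Therefore $c_n = 1$ for every $n \ge 1$, so $\chi(\tauw_1) = 1/(1-t)$ and $\tauw_1$ is spherically transitive on $W$, which is exactly the required transitivity. The delicate point is this parity cancellation: the actual values $V_k = 1, 1, 1, 1, 5, 13, \ldots$ grow quickly, and it is the $a \leftrightarrow b$ symmetry that forces $V_k - V_{k-1}$ always to be even.
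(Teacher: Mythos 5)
Your argument is correct, and I verified the two computational claims it hinges on: the local permutation induced by $\tauw_1$ on the two children of a node $\hat w$ is the identity exactly when $\hat w$ begins with $c$ and $\#_c(\hat w)$ is odd (this comes down to $\tau_0(c)=\tau_1(c)=a$ together with the tables $\tau_0=(a\,b\,c)$, $\tau_1=(a\,c)$), and the parity recursion $V_k=V_{k-1}+2C_{k-1}$ with $V_0=V_1=1$ does give $N_n$ odd for all $n\ge 2$. The overall framework is the same as the paper's: both identify the relevant binary tree of reduced words and reduce transitivity to oddness of the level permutations via Lemma~\ref{lemma:binary_transitive}. But the way you establish $\chi=1/(1-t)$ is genuinely different. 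The paper dualizes to $\ol\cb$, builds explicit isomorphisms $\phi_a,\phi_b,\phi_c$ onto $\{\ua,\da\}^*$, closes the conjugated automorphism into the finite self-similar family $\sigmaw_{x,d,y}$, and solves the resulting six-variable linear system for the characteristic series in $\zz_2[[t]]$; the well-definedness of that wreath recursion is also what certifies that the tree structure is preserved. You instead unfold the sign of the level-$n$ permutation as the parity of the number of swap-nodes on level $n-1$, characterize those nodes combinatorially, and count them mod $2$ by an elementary word-counting recursion exploiting the $a\leftrightarrow b$ symmetry. Your route is more hands-on and avoids the generating-function linear algebra, at the cost of having to verify separately (as you note) that $\tauw_1$ preserves the tree of reduced words; the paper's route is more mechanical and is essentially the same machinery it later systematizes in Lemma~\ref{lemma:cyclic_transitive} and Corollary~\ref{cor:cyclic_transitive_alg}. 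Both are valid; just make sure the ``direct case analysis'' and the base values $V_0=V_1=1$ (with the empty-word convention) are actually written out, since the whole proof rests on them.
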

\begin{proof}
We will write the argument down in terms of the dual automaton $\ol \cb = (A, Q,\ol \tau, \ol \sigma)$, pictured in Figure~\ref{figure:bellaterra_automaton}. Since taking the dual reverses words, we want to show that $\sigmaw_1 = \sigmaw_{\ol \cb, 1}$ acts transitively on the binary subtree $\T \subset Q^*$ of reduced words which \emph{begin} with $a$ or $c$.

It is convenient to put $\T$ into bijection with a binary tree of words $\mathbf{R} = \{\ua, \da\}^*$. We define the maps $\phi_a, \phi_b, \phi_c: \mathbf{R} \ra Q^*$ recursively by
\begin{align*}
\begin{tabular}{>{$}c<{$}@{$\qquad\qquad$}>{$}c<{$}}
\phi_a(\ua w) = b \, \phi_b(w) &
\phi_a(\da w) = c \, \phi_c(w) \\
\phi_b(\ua w) = c \, \phi_c(w) &
\phi_b(\da w) = a \, \phi_a(w) \\
\phi_c(\ua w) = a \, \phi_a(w) &
\phi_c(\da w) = b \, \phi_b(w) \\
\end{tabular}
\end{align*}

It is straightforward to check by induction on word length that for each $x \in Q$,  $\phi_x$ defines a tree isomorphism between $\mathbf{R}$ and the reduced words in $Q^*$ which do not begin with $x$. In particular, $\phi_b$ is a bijection between $\mathbf{R}$ and $\T$.

Now consider the dual $\ol \cb$ of the Bellaterra automaton, and in particular the corresponding automorphisms $\sigmaw_0, \sigmaw_1 \in \Aut(Q^*)$. Given $x, y \in Q$, $d \in A$, define
$$\sigmaw_{x,d,y} = \phi_x^{-1} \sigmaw_d \phi_y \in \Aut(\mathbf{R}).$$
Note that, a priori, the domain of $\phi_x^{-1}$ may not coincide with the image of $\sigmaw_d \phi_y$, so $\sigmaw_{x,d,y}$ may be ill-defined for some values of $x,d,y$. However, the computations below give an explicit recursion for computing $\sigmaw_{1,b,1}$, which also demonstrates that it is well-defined.

We can compute that, e.g.,
\begin{align*}
\sigmaw_{b,1,b}(\ua w)
&= \phi_b^{-1}(\sigmaw_1(\phi_b(\ua w)))\\
&= \phi_b^{-1}(\sigmaw_1(c \, \phi_c(w)))\\
&= \phi_b^{-1}(a \, \sigmaw_0(\phi_c(w)))\\
&= \da \phi_a^{-1}(\sigmaw_0(\phi_c(w)))\\
&= \da \sigmaw_{a,0,c}
\end{align*}

In particular, $\sec{\sigmaw_{b,1,b}}{\ua} = \sigmaw_{a,0,c}$.

Similar computations give the complete recursive description of $\sigmaw_{b,1,b}$, which we write down using the usual wreath recursion notation $g = \rho^\e (\sec{g}{\ua}, \sec{g}{\da})$, where $\rho$ swaps $\ua$ and $\da$:
\begin{align*}
\sigmaw_{b,1,b} &= \rho \, (\sigmaw_{a,0,c},\sigmaw_{c,1,a}) \\
\sigmaw_{a,0,c} &= (\sigmaw_{b,0,a},\sigmaw_{c,0,b}) \\
\sigmaw_{c,1,a} &= \rho \, (\sigmaw_{b,1,b}, \sigmaw_{a,0,c}) \\
\sigmaw_{b,0,a} &= (\sigmaw_{c,0,b}, \sigmaw_{a,1,c}) \\
\sigmaw_{c,0,b} &= (\sigmaw_{a,1,c}, \sigmaw_{b,0,a}) \\
\sigmaw_{a,1,c} &= \rho \, (\sigmaw_{c,1,a},\sigmaw_{b,1,b})
\end{align*}

Defining $F_{x,d,y} = \chi(\sigmaw_{x,d,y})$, this gives us the following linear equations in the ring $\zz_2[[t]]$:
\begin{align*}
F_{b,1,b} &= 1 + t(F_{a,0,c} + F_{c,1,a}) \\
F_{a,0,c} &= t(F_{b,0,a} + F_{c,0,b}) \\
F_{c,1,a} &= 1 + t(F_{b,1,b} + F_{a,0,c}) \\
F_{b,0,a} &= t(F_{c,0,b} + F_{a,1,c}) \\
F_{c,0,b} &= t(F_{a,1,c} + F_{b,0,a}) \\
F_{a,1,c} &= 1 + t(F_{c,1,a} + F_{b,1,b})
\end{align*}
Solving this system of equations yields
\begin{align*}
F_{b,1,b} &= 1/(1-t) \\
F_{a,0,c} &= 0\\
F_{c,1,a} &= 1/(1-t) \\
F_{b,0,a} &= t/(1-t) \\
F_{c,0,b} &= t/(1-t) \\
F_{a,1,c} &= 1,
\end{align*}
So we have$$\chi(\phi_b ^{-1} \sigmaw_{\ol \cb, 1} \phi_b) = 1/(1-t).$$

By Lemma~\ref{lemma:binary_transitive}, the automorphism $\phi_b ^{-1} \sigmaw_{ 1} \phi_b$ acts transitively on $\mathbf{R}$. Hence $\sigmaw_{1}$ acts transitively on $\T$, that is, for each $n$ it acts transitively on the set of length $n$ reduced words in $\{a,b,c\}$ which begin with $a$ or $c$. Hence, in the unreversed Bellaterra automaton $\cb$, we have that $\tauw_{\cb, 1}$ acts transitively on the words of a given length which end with $a$ or $c$.
\end{proof}

\begin{lemma}\label{lemma:orbit_nofix}
Let $\cm = (Q, A, \tau, \sigma)$ be a Mealy automaton, let $x$ be a letter in $A$, and let $w$ be a word in $Q^*$. Then $w$ stabilizes the infinite word $xxx\ldots = x^\infty$ if and only if every element of the orbit of $w$ under $\tauw_x$ stabilizes $x$. That is,
\begin{align*}
\act{w}(xxx\dots) = xxx\dots\qquad\text{if and only if} \qquad \sigmaw(\tauw_{x}^n(w), x) = x\text{ for all }n\geq 0.
\end{align*}
\end{lemma}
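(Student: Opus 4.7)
The plan is to unpack the recursive definition of the action of $w \in Q^*$ on an infinite word and identify the sequence of output letters with the orbit data on the right-hand side. By the definition in Section~\ref{section:mealy_automata}, reading the first letter of $x^\infty$ gives
$$\act{w}{(x^\infty)} = \sigmaw(w, x) \cdot \act{\tauw_x(w)}{(x^\infty)},$$
since the automaton word outputs $\sigmaw(w, x)$ on the leading $x$ and transitions to $w^x = \tauw_x(w)$ before processing the remaining $x^\infty$.

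Iterating this relation and collecting outputs, a straightforward induction on $n$ shows
$$\act{w}{(x^\infty)} = y_0\, y_1\, y_2 \dots, \qquad \text{where } y_n = \sigmaw\!\left(\tauw_x^{\,n}(w),\, x\right).$$
In other words, the forward orbit $\{w,\, \tauw_x(w),\, \tauw_x^{\,2}(w),\, \ldots\}$ is precisely the sequence of ``current state words'' that the automaton passes through while processing $x^\infty$, and the $n$-th emitted letter is what that state word does to $x$. This identification is the only real content of the proof; it follows by unraveling the wreath-recursion description of the action and using $w^{x^n} = \tauw_x^{\,n}(w)$, which is immediate from $w^{st} = (w^s)^t$.

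Given this identification, the equivalence follows instantly: $\act{w}{(x^\infty)} = x^\infty$ if and only if $y_n = x$ for every $n \geq 0$, which is exactly the condition $\sigmaw(\tauw_x^{\,n}(w), x) = x$ for all $n \geq 0$. I anticipate no real obstacle here — the lemma is essentially a restatement of the fact that the trajectory of a Mealy automaton along the constant ray $x^\infty$ is read off by tracking the orbit of the state word under $\tauw_x$. The only care needed is writing the induction so that it genuinely controls the \emph{entire} output word rather than just a finite prefix, but this is automatic since the $n$-th letter of the output depends only on the first $n+1$ steps of the computation.
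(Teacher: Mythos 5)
Your proof is correct and follows essentially the same route as the paper's: both identify the $n$-th output letter of $\act{w}{(x^\infty)}$ as $\sigmaw(\tauw_x^{\,n}(w),x)$ using the relation $\act{w}{(st)}=\act{w}{s}\,\act{w^s}{t}$ together with $w^{x^n}=\tauw_x^{\,n}(w)$, and then read off the equivalence. The only cosmetic difference is that you peel one letter at a time from the front while the paper splits off the prefix $x^n$ and inspects the last letter of $\act{w}{(x^{n+1})}$; the content is identical.
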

\begin{proof}
Say
\begin{align*}
\act{w}(xxx\dots) = y_0y_1y_2\dots.
\end{align*}
Then $y_n$ is the last letter of $\act{w}(x^{n+1})$. Letting $X = x^n$, we have
\begin{align*}
\act{w}(x^{n+1}) &
= \act{w}(Xx)
= \act{w}(X)\,\act{\tau(w, X)}x,
\end{align*}
So,
\begin{align*}
y_n &
= \act{\tau(w, X)}x
= \sigmaw(\tau(w, X),x)
= \sigmaw(\tau_X(w),x)
= \sigmaw(\tau_x^n(w),x),
\end{align*}
and therefore $\act{w}{xxx\dots} = xxx\dots$ if and only if $\sigmaw(\tauw_{x}^n(w), x) = x$ for every $n \geq 0$, as desired.
\end{proof}

\begin{thm}\label{theorem:bellaterra_diameter}
Let $B_n$ denote the $n$-th Bellaterra graph. Then $\diam(B_n) = O(n^2)$.
\end{thm}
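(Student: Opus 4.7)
The plan is to construct, for each $n \geq 1$, a short word $v_n \in Q^*$ of length $O(n)$ that acts on $1^n$ by flipping only the last bit, i.e.\ $\sigmaw_{v_n}(1^n) = 1^{n-1} 0$, and then to use these $v_k$'s as bit-flipping primitives to assemble $O(n^2)$-length paths between arbitrary pairs of vertices of $B_n$.

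By Lemma~\ref{lemma:orbit_nofix}, such a $v_n$ exists precisely when $\tauw_1^k(v_n)$ stabilizes $1$ (equivalently, has an even number of $c$'s) for $k = 0, 1, \ldots, n-2$, and $\tauw_1^{n-1}(v_n)$ does not stabilize $1$ (has an odd number of $c$'s). By Lemma~\ref{lemma:bellaterra_transitive}, $\tauw_1$ acts as a single $2^m$-cycle on the set of reduced words of length $m$ ending in $a$ or $c$. To produce $v_n$ of length $m = O(n)$, I would identify a window of $n-1$ consecutive entries of even $c$-parity followed by one of odd $c$-parity along this cycle. Establishing the existence of such a window is the main technical hurdle, and I expect to obtain it either via an explicit recursive construction of $v_n$ from $v_{n-1}$, or by using the wreath-recursion data from the proof of Lemma~\ref{lemma:bellaterra_transitive} (the system of equations for the characteristic functions $F_{x,d,y}$) to track how the $c$-parity evolves along the orbit. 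An initial check at small $n$ (e.g.\ $v_1 = c$, $v_2 = ab$, $v_3 = aba$) is consistent with $|v_n| = n$.

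Once $v_k$ is available for every $k \leq n$, the diameter bound follows from a left-to-right bit-correction scheme. Given any $s \in A^n$, at stage $k$ (for $k = 1, \ldots, n$) the first $k-1$ bits of the current word have already been fixed to $1$; if the $k$-th bit is already $1$ do nothing, otherwise apply $v_k$. Because $\sigmaw_{v_k}$ fixes the prefix $1^{k-1}$ (as follows from $\sigmaw_{v_k}(1^k) = 1^{k-1} 0$ and the fact that a tree automorphism is determined on prefixes by its action on prefixes), and because $\tauw_1^{k-1}(v_k)$ has odd $c$-count, the state reached after reading the prefix $1^{k-1}$ acts on the next bit as the involution $\sigma_c = (0\,1)$; hence $v_k$ flips the $k$-th bit regardless of its current value while leaving the first $k-1$ bits unchanged (it may scramble bits past position $k$, but those are corrected in later stages). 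After $n$ stages the current word is $1^n$ and the path length is at most $\sum_{k=1}^n |v_k| = O(n^2)$. Since the generators $\sigmaw_a, \sigmaw_b, \sigmaw_c$ are involutions, $B_n$ is undirected, so $d_{B_n}(s,s') \leq d_{B_n}(s, 1^n) + d_{B_n}(1^n, s') = O(n^2)$, giving $\diam(B_n) = O(n^2)$.
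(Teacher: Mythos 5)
Your overall strategy is the same as the paper's: use Lemma~\ref{lemma:orbit_nofix} to translate ``flips exactly the last bit of $1^n$'' into a parity pattern along the $\tauw_1$-orbit, use Lemma~\ref{lemma:bellaterra_transitive} to control that orbit, and then run a left-to-right bit-correction induction costing $O(n)$ per bit. The bit-correction step and the reduction of the diameter bound to $d(s,1^n)$ are fine. The problem is that the step you yourself flag as ``the main technical hurdle'' --- producing, for each $n$, a word $v_n$ of length $O(n)$ whose orbit under $\tauw_1$ has even $c$-parity at positions $0,\dots,n-2$ and odd parity at position $n-1$ --- is precisely the heart of the theorem, and nothing in your proposal establishes it. The small cases $v_1=c$, $v_2=ab$, $v_3=aba$ do not give a recursion, and the wreath-recursion data from Lemma~\ref{lemma:bellaterra_transitive} will not hand you the window directly: the characteristic series $F_{x,d,y}$ record only the \emph{total} sign of the permutation on each level, i.e.\ the sum of the $c$-parities over an entire orbit, not the arrangement of parities along the cycle. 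So as written the argument has a genuine gap at its central point.

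The paper closes exactly this gap with two ideas you are missing. First, a pigeonhole argument: there are $2^{n+1}-1 > 2^n = \abs{A^n}$ reduced words of length at most $n$ ending in $a$ or $c$, so two of them, $v$ and $w$, satisfy $\act{v}{(1^n)}=\act{w}{(1^n)}$; since the generators are involutions, the reduced form $u$ of $\ol v w$ has $\len(u)\le 2n$, fixes $1^n$, and (because the last letter of $w$ survives reduction) still ends in $a$ or $c$, hence by Lemmas~\ref{lemma:bellaterra_transitive} and~\ref{lemma:orbit_nofix} does \emph{not} fix $1^\infty$. This already gives a run of at least $n$ consecutive even parities followed eventually by an odd one --- but possibly at some position $k\ge n$ rather than at position $n$ exactly. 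Second, a shift by sections: if $k$ is maximal with $\act{u}{(1^k)}=1^k$, then $u'=u^{1^{k-n}}$ has the same length and satisfies $\act{u'}{(1^{n+1})}=1^n0$. In other words, transitivity is used only to guarantee that \emph{every} reduced word ending in $a$ or $c$ moves $1^\infty$; one never needs to locate the parity window explicitly, because taking a section along $1^{k-n}$ realigns it for free. With $\len(u')\le 2n$ in place of your conjectured $\len(v_n)=n$, the rest of your argument goes through verbatim.
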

\begin{proof}
Let $\cb = (Q,A, \tau, \sigma)$ denote the Bellaterra automaton, so that $B_n = \Gamma_{\cb,n}$. It is enough to show that for some $C$ the ball of radius $Cn^2$ around the vertex $1^n = 11\dots 1$ covers all of $B_n$. That is, we will show that for every number $n$, and every $v \in \cb_n$,
$$d(1^n, v) \leq C n^2.$$

The only letter in $Q = \{a,b,c\}$ which swaps the elements of $A$ is $c$. The other two letters fix $0$ and $1$. Hence, a word $w \in Q^*$ fixes $1$ if and only if it has an even number of $c$'s.

For each $n > 0$, there is a reduced word ending in $a$ or $c$ which contains an odd number of $c$'s. We can take, e.g. $abab...abc$ or $baba...abc$. By Lemma~\ref{lemma:bellaterra_transitive}, if $w$ is any reduced word of length $n$ ending in $a$ or $c$, then its orbit under $\tauw_1$ contains some word which does not fix $1$. Hence, by Lemma~\ref{lemma:orbit_nofix}, $w$ does not fix the infinite word $111\ldots = x^\infty$.

Given a number $n\geq 1$, we have $\abs{A^n} = 2^n$, and there are $2^{n+1} - 1$ reduced words of length $n$ or less which end in $a$ or $c$. By the pigeonhole principle, there must be two such words, $v,w$ with $\act{v}(1^n) = \act{w}(1^n)$. We may assume $\len(v) \leq \len(w)$. Since $a^2$, $b^2$, and $c^2$ all act trivially on $A^*$, reversing a word inverts its action on $A^*$. Let $u$ be the reduced word formed by cancelling pairs of repeated letters in $\ol v w$. Then,
\begin{gather*}
\act{u}(1^n) = \act{\ol v w}(1^n) = \act{\ol v v}(1^n) = 1^n,
\\
\llap{\and}
\len(u) \leq \len(\ol v w) = \len(v) + \len(w) \leq 2n.
\end{gather*}
Since $v \neq w$, $u$ is not the empty word. We assumed that $\len(v) \leq \len(w)$, so the last letter of $w$ is not cancelled. Hence $u$ also ends in in $a$ or $c$, and therefore
$$
\act{u}(111\dots) \neq 111\dots.
$$

Let $k$ be the maximal integer such that $\act{u}{(1^k)} = 1^k$. We know $k \geq n$ and $\act{u}{(1^{k+1})} = 1^k0$. So, letting $s = 1^{k-n}$, $t = 1^{n+1}$, and $t' = 1^n0$, we have
\begin{align*}
st'
= \act{u}(st)
= \act{u}s\,\act{u'}(t)
= s\,\act{u'}(t),
\end{align*}
where $u' = u^s$. So we have
\begin{gather*}
\act{u'}(1^{n+1}) = 1^{n}0,
\\
\llap{\and}
\len(u') = \len(u) \leq 2n.
\end{gather*}

This construction works for all $n \geq 1$. That is, for every $n \geq 1$, there exists a $u_n \in Q^*$ with $\len(u_n) \leq 2n$ and $\act{u_n}{(1^{n}0)} = 1^{n+1}$.

We now prove by induction on $n$ that for every $s \in A^n$, there is a $w \in Q^*$ with $\len(w) \leq n^2$ such that $\act{w}{s} = 1^n$. The base cases $n = 0$ and $n = 1$ are trivial. For the inductive step, consider any $n \geq 1$. Given $s \in A^{n+1}$, let $s'$ be $s$ with the last digit removed. By the induction hypothesis know there is a word $w$ with $\len(w) \leq n^2$ such that $\act{w}{s'} = 1^n$. Then either $\act{w}{s} = 1^{n+1}$ or $\act{w}{s} = 1^n0$. In the first case, we are done. In the second case, $\act{u_n w}{s} = 1^{n+1}$, and $\len(u_n w) \leq 2n + n^2 \leq (n+1)^2$, so we are done.

So, we have shown that in the graph $B_n = \Gamma_{\cb,n}$, we have $d(1^n,s) \leq n^2$ for every $s \in A^n$. It follows that for any $s,t \in B_n$,
$$d(s,t) \leq d(s,1^n) + d(1^n,t) \leq 2n^2,$$
i.e.~$\diam(B_n) \leq 2n^2.$
\end{proof}

%-------------------------------------------------------------------------------

\section{The Aleshin automaton}\label{section:aleshin}

The Aleshin automaton $\ca$ and the Bellaterra automaton $\cb$ are closely related. Indeed, let $\tau_d: \{0,1\}^* \ra \{0,1\}^*$ denote map which swaps every digit of a binary word. Then it is straightforward to check by induction that
\begin{align*}
\tauw_{\ca, a} &= \tauw_d\,\tauw_{\cb, a},\\
\tauw_{\ca, b} &= \tauw_d\,\tauw_{\cb, c}, \\
\llap{\and}
\tauw_{\ca, c} &= \tauw_d\,\tauw_{\cb, c}.
\end{align*}

With this observation, Theorem~\ref{theorem:bellaterra_diameter} has the following corollary.
\begin{cor} \label{cor:aleshin_diameter}
Let $A_n$ denote the $n$-th Aleshin graph. Then $\diam(A_n) = O(n^2)$.
\end{cor}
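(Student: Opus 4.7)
My plan is to lift the diameter bound from Theorem~\ref{theorem:bellaterra_diameter} to $A_n$ by using the observation preceding the corollary to translate short Bellaterra walks into Aleshin walks of comparable length. On the level of tree automorphisms the observation reads $\sigmaw_{\ca, q} = \tau_d \circ \sigmaw_{\cb, q}$ for each $q \in \{a,b,c\}$, and since $\sigmaw_{\cb, q}^{2} = \id$ (Bellaterra involutions) and $\tau_d^{2} = \id$, we also obtain $\sigmaw_{\ca, q}^{-1} = \sigmaw_{\cb, q} \circ \tau_d$. I will use these two forms of the identity together.

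The crucial trick is that a length-$2m$ Aleshin walk whose edges alternate forward and backward (sign pattern $+,-,+,-,\dots,-$) realizes precisely the corresponding Bellaterra walk: substituting the two identities and cancelling consecutive pairs of $\tau_d$'s,
$$\sigmaw_{\ca, q_{2m}}^{-1}\,\sigmaw_{\ca, q_{2m-1}} \cdots \sigmaw_{\ca, q_{2}}^{-1}\,\sigmaw_{\ca, q_{1}} \;=\; \sigmaw_{\cb, q_{2m}\cdots q_{1}}.$$
Consequently any Bellaterra walk of even length lifts to an Aleshin walk of the same length with identical endpoints.

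To handle the parity mismatch I would use that $b$ gives an odd-length self-loop at $1^n$ in $B_n$: since $\sigma(b,1) = 1$ and $\tau(b,1) = b$, an easy induction on $n$ shows $\sigmaw_{\cb, b}(1^n) = 1^n$. Hence for any Bellaterra word $w$ with $\sigmaw_{\cb, w}(1^n) = s$ we also have $\sigmaw_{\cb, wb}(1^n) = s$ with $\len(wb) = \len(w) + 1$, so we can always exhibit a Bellaterra walk of even length at most $n^2 + 1$ from $1^n$ to $s$, using the bound $d_{B_n}(1^n, s) \le n^2$ extracted from the proof of Theorem~\ref{theorem:bellaterra_diameter}. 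Combining with the pairing trick yields $d_{A_n}(1^n, s) \le n^2 + 1$ for every $s \in A^n$, and the triangle inequality then gives $\diam(A_n) \le 2(n^2 + 1) = O(n^2)$.

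The main obstacle the argument has to navigate is exactly this parity mismatch: an odd-length Bellaterra walk translated via the alternating pairing lands at $\tau_d(s)$ rather than at $s$, and there is no obvious way to bound $d_{A_n}(\tau_d(s), s)$ cheaply in general. The resolution is the existence of a single-letter stabilizer element at the basepoint $1^n$ --- namely the Bellaterra generator $b$ --- which lets us adjust parity for free, and everything else is bookkeeping.
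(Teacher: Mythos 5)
Your proof is correct and takes essentially the same route as the paper: you pair consecutive Aleshin edges so that the two copies of the digit-swap $\tau_d$ cancel, thereby realizing any even-length Bellaterra walk as an Aleshin walk with the same endpoints, and you repair the parity using the self-loop at $1^n$ coming from the generator $b$. This is precisely the argument in the paper's proof of Corollary~\ref{cor:aleshin_diameter}.
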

\begin{proof}
For every pair $q, r \in \{a,b,c\}$, we have
$$
\tauw_{\ca, q}^{-1}\, \tauw_{\ca, r}
= \tauw_{\cb, q}^{-1} \,\tauw_d^{-1} \, \tauw_d \, \tauw_{\cb, r}
= \tauw_{\cb, q} \tauw_{\cb, r}.$$
So, if two words in $\{0,1\}^n$ are separated by a path of length $2$ in the Bellaterra graph $B_n$, they are also separated by a path of length $2$ in the Aleshin graph $A_n$. It follows that two endpoints of an even-length path in $B_n$ are endpoints of a path in $A_n$ of the same length.

For any word $s \in \{0,1\}^n$ there is a path in $B_n$ of length $O(n^2)$ from $1^n$ to $s$. We may assume that this path has even length since $1^n$ has an edge in $B_n$ from itself to itself. This corresponds to a path in $A_n$ of the same length, so for any $s \in\{0,1\}^n$, there is a path in $A_n$ of length $O(n^2)$ from $1^n$ to $s$. Therefore, $\diam(\Gamma_{\ca,n}) = O(n^2),$ as desired.
\end{proof}

%-------------------------------------------------------------------------------

\section{Generalizations}\label{section:general}

The proof of Theorem~\ref{theorem:bellaterra_diameter} can be adapted to prove a more general result. In order to generalize to automata with larger alphabets, we need to consider a restricted type of automaton. We say an Mealy automaton $\cm = (Q,A, \tau, \sigma)$ is \emph{cyclic} if it is invertible, and $\gen{\sigma_q \mid q \in Q} = \gen{(x_1\,x_2\,\dots\,x_n)}$, where $\{x_1, x_2, \dots, x_n\} = A$. That is, if its action on $A$ is a cyclic permutation group. In particular, any automaton with $\abs{A} = 2$ is cyclic. This will enable us to reach any word of the form $x^ny$ from $x^{n+1}$ in a short time, as long as we can reach some such word.

We first state and prove the general result with the weakest assumptions under which our argument guarantees polynomial growth of $\diam (\Gamma_{\cm,n})$.

\begin{thm}\label{thm:growth_to_diam} Let $\cm = (Q,A,\sigma,\tau)$ be a cyclic Mealy automaton with $\abs{A}$ prime, and let $\Gamma = \Gamma_{\cm, \infty}$. Suppose there is a letter $x \in A$ and constants $\alpha > 0$, $K > 1$ such that, for sufficiently large~$r$,
\[ \abs{ B_\Gamma(xxx\dots, r) } \geq K^{r^\alpha}.\]
Then there is a constant $C>0$, such that for all $n$,
\[\diam(\Gamma_{\cm,n}) \leq C n^{1 + 1/\alpha}.\]
\end{thm}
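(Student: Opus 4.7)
The plan is to generalize the proof of Theorem~\ref{theorem:bellaterra_diameter} by replacing the explicit transitivity input (Lemma~\ref{lemma:bellaterra_transitive}) with a pigeonhole argument driven by the ball-growth hypothesis, and then running the same inductive pumping on prefixes of $x^{n+1}$.

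First I would extract a short word $u \in F_Q$ that fixes $x^n$ but moves $x^\infty$. By hypothesis, there is a constant $c > 0$ so that the ball $B_\Gamma(x^\infty, r_n)$ has more than $|A|^n$ elements as soon as $r_n = \lceil c\, n^{1/\alpha} \rceil$. Every element of this ball has the form $\act{w}{x^\infty}$ for some $w \in F_Q$ with $\len(w) \leq r_n$, so projecting to the first $n$ letters and invoking pigeonhole yields $w_1, w_2 \in F_Q$ with $\act{w_1}{x^\infty}$ and $\act{w_2}{x^\infty}$ distinct but sharing the same $n$-prefix. Then $u \deq w_2^{-1} w_1$ has $\len(u) \leq 2 r_n$, while $\act{u}{x^\infty}$ begins with $x^n$ yet differs from $x^\infty$.

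Next I would localize this motion to a single letter exactly as in the proof of Theorem~\ref{theorem:bellaterra_diameter}: letting $k \geq n$ be the maximal integer with $\act{u}{x^k} = x^k$, the wreath recursion applied to the factorization $x^{k+1} = x^{k-n} \cdot x^{n+1}$ shows that $u' \deq u^{x^{k-n}}$ satisfies $\len(u') \leq \len(u) \leq 2 r_n$ and $\act{u'}{x^{n+1}} = x^n y$ for some $y \neq x$. The cyclic plus prime-alphabet hypothesis now lets me pump this single motion into full control of the last letter: the section of $u'$ at $x^n$ induces a permutation $\pi$ of $A$ lying in $\gen{\sigma_q \mid q \in Q}$, which by hypothesis is cyclic of prime order $|A|$; since $\pi(x) = y \neq x$, $\pi$ generates the whole group, so $\{\pi^k(x) : 0 \leq k < |A|\} = A$. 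Consequently, for every $z \in A$ there is some $k(z) < |A|$ with $\act{(u')^{k(z)}}{x^{n+1}} = x^n z$, giving $d_{\Gamma_{\cm,n+1}}(x^{n+1}, x^n z) \leq 2(|A|-1) r_n$.

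To finish, I induct on $n$: writing $L_n$ for a bound on $d(x^n, s)$ valid for every $s \in A^n$, given $s \in A^{n+1}$ with $n$-prefix $s'$ the inductive hypothesis produces $w_0 \in F_Q$ of length $\leq L_n$ with $\act{w_0}{s'} = x^n$, hence $\act{w_0}{s} = x^n t$ for some $t \in A$; composing with the last-letter correction yields $L_{n+1} \leq L_n + 2(|A|-1) r_n$. Since $r_n = O(n^{1/\alpha})$, telescoping gives $L_n = O(n^{1 + 1/\alpha})$, and hence $\diam(\Gamma_{\cm,n}) \leq 2 L_n = O(n^{1 + 1/\alpha})$. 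The delicate point is the pumping step: the combination of cyclicity with $|A|$ prime is precisely what forces any nontrivial $\pi$ to generate the full permutation group on $A$, and dropping either hypothesis would leave open the possibility that iterating $u'$ cycles the last letter only through a proper subset of $A$.
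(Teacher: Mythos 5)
Your proposal is correct and follows essentially the same route as the paper's proof: pigeonhole on the ball $B_\Gamma(x^\infty,r)$ to produce a short word fixing $x^n$ but moving $x^\infty$, pass to a section to localize the motion to the letter following $x^n$, use cyclicity together with $\abs{A}$ prime to make that last-letter permutation transitive, and then induct on prefixes; the only cosmetic difference is that you work directly in $F_Q$ where the paper first replaces $\cm$ by $\cm\cup\cm^{-1}$. No gaps.
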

\begin{proof}
Let $p = \abs{A}$. By replacing $\cm$ with $\cm \cup \cm^{-1}$ if necessary, we may assume that for every $q \in Q$, there is a $q' \in Q$ with $\sigmaw_{q'} = \sigmaw_q^{-1}$. This replacement adds edges to the Schreier graphs $\Gamma_{\cm, n}$, but only reverses of edges which were already there, so $\diam(\Gamma_{\cm,n})$ and $B_{\Gamma_{\cm,n}}(s,r)$ are unaffected. Then for a word $w \in Q^*$, we define $w^{-1}$ to be $w$, reversed, with each letter $q$ replaced by $q'$, so that $\sigmaw_{w^{-1}} = \sigmaw_w^{-1}$.

Given sufficiently large $n$, pick $r$ such that
\begin{align*}
((\log_K p) n)^{1/\alpha} < r < 2((\log_K p) n)^{1/\alpha}.
\end{align*}
Then $\abs{ B_\Gamma(xxx\dots, r) } > p^n$. By the pigeonhole principle, some two elements of $B_\Gamma(xxx\dots, r)$ have the same first $n$ digits. That is, there are $v,w \in (Q \cup Q^{-1})^*$ with
\begin{gather*}
\len(v), \len(w)
\leq r,
\quad
\act{v}(x^n)
= \act{w}(x^n),
\and
\act{v}(xxx\dots) \neq \act{w}(xxx\dots).
\end{gather*}
So, there is a $u_0  = v^{-1}w \in (Q \cup Q^{-1})^*$ with
\begin{gather*}
\len(u_0) \leq 2r,
\\
\act{u_0}(x^n) = x^n,
\\
\llap{\and}  \act{u_0}(xxx\dots) \neq xxx\dots.
\end{gather*}

There is some smallest value of $k \geq n+1$ such that $\act{u_0}(x^k) \neq x^k$. Let $X_0 = x^{k-n-1}$ and $X = x^n$, so that $x^k = X_0 X x$ and $\act{u_0}{(x^k)} = X_0 X y$ for some $y \in A$ with $y \neq x$. Let $u = u_0^{X_0}$ so in particular, $\len(u) = \len(u_0)$. Then,
\begin{align*}
X_0 X y
=
\act{u_0}{(X_0 X x)}
=
\act{u_0}X_0\,\act{u}{(X x)}
=
X_0\,\act{u}{(X x)},
\end{align*}
so
\begin{align*}
\act{u}{(X x)} = X y.
\end{align*}

Similarly, if $u' = u^X$, we have
\begin{align*}
\act{u}{(X z)} = X\, \act{u'}{z}
\end{align*}
for any $z \in A$. Since $\act{u'}{x} = y \neq x$ and $\cm$ is cyclic, the action of $u'$ on $A$ is a nontrivial cyclic permutation. Since $p = \abs A$ is prime, $u'$ acts transitively on $A$, and therefore $u$ acts transitively on $\setc{Xz}{z \in Aa}$. It follows that for any $z, z' \in A$,
\begin{align*}
d(Xz, Xz') \leq p \,\len(u) \leq 2 p r \leq 4p((\log_K p) n)^{1/\alpha}.
\end{align*}

\noindent
Thus, there is a constant $C$ such that for sufficiently large $n$, we have
$$d(x^nz, x^{n+1}) \leq C n^{1/\alpha}, \quad \text{for all} \ \, b \in A. $$
By increasing the constant if necessary, we can make this true for all~$n$.

 Now let us show by induction on~$n$ that for all $s \in A^n$, we have $d(s,x^n) < C n^{1+1/\alpha}$. The base case $n = 0$ is trivial. For the inductive step, take any $s \in A^{n+1}$, and let $s'$ be its first $n$ letters. We know $d(s',x^n) < C n^{1+1/\alpha}$. There is some word $w\in  (Q \cup Q^{-1})^*$ with $\len(w) = d(s',x^n)$ and $\act{w}{(s')} = x^n$. Then $\act{w}{s} = x^nz$ for some $z \in B$. Thus,
\begin{align*}
d(s,x^{n+1})
&\leq d(s,x^{n}z) + d(x^{n}z,x^{n+1}) \\
&\leq C n^{1+1/\alpha} + Cn^{1/\alpha}\\
&\leq C (n+1)^{1+1/\alpha},
\end{align*}
which completes the induction.

It follows that for any $s, t \in A^n$, $d(s,t) \leq d(s,x^n) + d(x^n, t) \leq 2C n^{1+1/\alpha},$ i.e.,
$$\diam(\Gamma_{\cm,n}) \leq 2C n^{1+1/\alpha}.$$
\end{proof}

In all the cases where we apply this, $\abs{B_\Gamma(xxx\dots, r)}$ will have exponential growth, so we state that case separately.

\begin{cor}\label{cor:expgrowth_to_diam}
Let $\cm = (Q,A,\sigma,\tau)$ be a cyclic Mealy automaton with $\abs{A}$ prime. Let $\Gamma = \Gamma_{\cm,\infty}$. If there is an $x \in A$ and a constant $K > 1$ such that $$\abs{B_\Gamma(xxx\dots, r)} \geq K^r$$ for sufficiently large $r$, then there is a constant $C>0$, such that 
$$\diam(\Gamma_{\cm,n}) \leq C n^2.$$
\end{cor}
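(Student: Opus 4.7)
The plan is to observe that this is the special case $\alpha = 1$ of Theorem~\ref{thm:growth_to_diam}. First I would note that the hypothesis $|B_\Gamma(xxx\dots, r)| \geq K^r$ is exactly the growth bound $|B_\Gamma(xxx\dots, r)| \geq K^{r^\alpha}$ with $\alpha = 1$. Then apply Theorem~\ref{thm:growth_to_diam} directly: there is a constant $C > 0$ such that
\[
\diam(\Gamma_{\cm,n}) \leq C n^{1 + 1/\alpha} = C n^{1 + 1} = C n^2,
\]
for all $n$. This is exactly the desired conclusion.

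There is essentially no obstacle here, since the hypotheses of Theorem~\ref{thm:growth_to_diam} (cyclic automaton, prime alphabet, stretched-exponential growth from $xxx\dots$) are matched term-for-term by the hypotheses of the corollary, with $\alpha$ specialized to $1$. The only thing to check is that ``sufficiently large $r$'' in the corollary's hypothesis is the same quantifier used in the theorem's hypothesis, which it is. Thus no new argument is required.
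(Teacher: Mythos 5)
Your proposal is correct and is exactly the paper's (implicit) argument: the paper states this corollary without proof precisely because it is the $\alpha = 1$ specialization of Theorem~\ref{thm:growth_to_diam}, under which $K^{r^\alpha}$ becomes $K^r$ and $n^{1+1/\alpha}$ becomes $n^2$. Nothing further is needed.
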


It is not always easy to guarantee that $\abs{B_\Gamma(xxx\dots, r)} \geq K^r$ grows quickly, so we prove an additional result based on the size of orbits of $\tauw_x$ in $Q^n$. Loosely, if the orbits grow quickly enough, it must be because there are enough distinct images of words of the form $x^m$.

\begin{thm}\label{thm:orbit_to_diam}
Let $\cm = (Q,A,\sigma,\tau)$ be a reversible\footnote{The assumption that $\cm$ is reversible may be lifted, if we replace $\abs{ \{\tauw_x^k(w) \mid k \in \zz\} }$ with the length of the (eventual) period of $w, \tau_x(w), \tau_x^2(w), \dots$.} cyclic Mealy automaton with $\abs{A}$ prime. Suppose there is a letter $x \in A$ and constants $K>1$, $\alpha>0$ such that for sufficiently large $n$, there is a $w \in Q^n$ with
$$\abs{\{\tauw_x^k(w) \mid k \in \zz\}} \geq K^{n^\alpha}.$$
Then there is a constant $C>0$, such that for all $n$, we have
$$\diam(\Gamma_{\cm,n}) \,\leq\, C\thinspace n^{1+1/\alpha}.$$
\end{thm}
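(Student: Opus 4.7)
The plan is to adapt the proof of Theorem~\ref{thm:growth_to_diam}, with the orbit-growth hypothesis playing the role of the ball-growth hypothesis. As a preliminary, I will replace $\cm$ by $\cm\cup\cm^{-1}$ so that every state has an inverse; this does not affect diameters or ball sizes in $\Gamma_{\cm,n}$.

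For each target level $n$, set $m=\lceil(n/\log_p K)^{1/\alpha}\rceil$ so that $K^{m^\alpha}\geq p^n$, and pick $w\in Q^m$ with $N=\abs{\{\tauw_x^k(w)\mid k\in\zz\}}\geq K^{m^\alpha}\geq p^n$. Writing $v_k=\tauw_x^k(w)$, iteration of the identity $\act{w}{(xt)}=\sigma(w,x)\cdot\act{\tauw_x(w)}{t}$ gives $\sigmaw_{v_k}(x^\infty)=y_ky_{k+1}y_{k+2}\cdots$ with $y_j=\sigmaw(v_j,x)\in A$, so the $N$ images are shifts of a single periodic sequence. Since $N>p^n$, pigeonhole on the map $k\mapsto\sigmaw_{v_k}(x^n)\in A^n$ yields $i\neq j$ in $\{0,\ldots,N-1\}$ with $\sigmaw_{v_i}(x^n)=\sigmaw_{v_j}(x^n)$; I also need to arrange that $\sigmaw_{v_i}(x^\infty)\neq\sigmaw_{v_j}(x^\infty)$. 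Setting $u_0=v_i^{-1}v_j\in(Q\cup Q^{-1})^*$ then produces a word of length at most $2m=O(n^{1/\alpha})$ with $\sigmaw_{u_0}(x^n)=x^n$ and $\sigmaw_{u_0}(x^\infty)\neq x^\infty$.

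From this point the rest of the argument is essentially identical to the proof of Theorem~\ref{thm:growth_to_diam}: take the minimal $k\geq n+1$ with $\sigmaw_{u_0}(x^k)\neq x^k$, write $x^k=X_0Xx$ with $X=x^n$, and set $u=u_0^{X_0}$, $u'=u^X$. Since $\cm$ is cyclic and $\abs{A}=p$ is prime, $u'$ acts as a full $p$-cycle on $A$, so iterating $u$ transitively permutes $\{Xz:z\in A\}$ and yields $d(x^nz,x^{n+1})\leq p\cdot\len(u_0)=O(n^{1/\alpha})$. A routine induction on the level, exactly as at the end of Theorem~\ref{thm:growth_to_diam}, then gives $\diam(\Gamma_{\cm,n})\leq Cn^{1+1/\alpha}$.

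The delicate step is ensuring that the pigeonhole pair $(v_i,v_j)$ satisfies $\sigmaw_{v_i}(x^\infty)\neq\sigmaw_{v_j}(x^\infty)$: the orbit partitions into $P$ classes under this equivalence, where $P$ is the period of the sequence $(y_j)$ and $P\mid N$, and bare pigeonhole succeeds exactly when $P>p^n$. I expect the main technical work to be a structural lemma showing that in a reversible cyclic automaton a small period $P$ must force the $\tauw_x$-orbit itself to be small, making the hypothesis $N\geq K^{m^\alpha}$ incompatible with $P\leq p^n$ at the values of $m,n$ we use. Concretely, $y_{k+P}=y_k$ says that $v_k^{-1}v_{k+P}$ stabilizes $x^\infty$ for every $k$, and reversibility together with cyclicity should prevent too many such symmetries from coexisting in a single orbit of size $N$.
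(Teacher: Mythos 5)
Your skeleton is the right one --- everything after the construction of $u_0$ is literally the proof of Theorem~\ref{thm:growth_to_diam}, and the paper itself proceeds by reducing to that theorem, showing $\abs{B_\Gamma(xxx\dots,n)} \geq \wt K^{\,n^\alpha}$ for some $\wt K>1$. The genuine gap is exactly the step you flag as delicate, and the structural lemma you propose to close it aims at the wrong object. You need the number of distinct infinite words $\act{v_k}{(xxx\dots)}$, $v_k=\tauw_x^k(w)$ --- equivalently the period $P$ of the output sequence $(y_j)$ of the \emph{full} word $w$ --- to be large. But the orbit length $N$ is the lcm of the periods $m_i$ of the individual letter sequences $(q_{i,k})_k$, where $\tauw_x^k(w)=q_{0,k}\dots q_{n,k}$; it says nothing directly about the period of the word's output. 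A single position deep inside $w$ can carry essentially all of the orbit period while the output of the whole stack is nearly constant, so no lemma of the form ``small $P$ forces small $N$'' is available for $w$ itself, and neither reversibility nor cyclicity rescues this (they are not used at this point in the paper beyond guaranteeing periodicity of the orbit).

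The paper's fix is to pass to a \emph{sub-word}: since $N$ is a product of powers of primes $\leq\abs{Q}$, some $m_i\geq N^{1/c}$ where $c$ is the number of such primes; and because $q_{i,k+1}=q_{i,k}^{x_k}$ with $x_k$ the $k$-th letter of the stream $s=\act{v}{(xxx\dots)}$ fed to position $i$ by the sub-word $v$ of $w$ upstream of it, the period $l$ of $s$ satisfies $m_i\leq \abs{Q}!\,l$. So it is $\act{v}{(xxx\dots)}$, not $\act{w}{(xxx\dots)}$, whose period is bounded below by $N^{1/c}/\abs{Q}!$, and the $\tauw_x$-orbit of $v$ then supplies that many distinct elements of $B_\Gamma(xxx\dots,n)$. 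Note the quantitative consequence: even after this repair, an orbit of size $p^n$ yields only about $p^{n/c}$ distinct infinite images, not more than $p^n$, so your choice $m=\lceil (n/\log_pK)^{1/\alpha}\rceil$ is too small by a constant factor and your pigeonhole at radius $2m$ would still fail; the paper absorbs this loss into the smaller base $\wt K<K^{1/c}$ before invoking Theorem~\ref{thm:growth_to_diam}, which then picks a correspondingly larger radius. With the sub-word lemma and the adjusted constants, the rest of your argument goes through as written.
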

\begin{proof}
Let $P = \{p \in \nn \mid \text{$p$ prime}, p \leq \abs{Q}\}$. It is easy to see by induction on length that for each $w \in Q^*$, the sequence $w, \tau_x(w), \tau_x^2(w), \dots$ is periodic with period $m$, where $m$ is a product of some powers of primes in $P$. Define letters $q_{i,k}$ via
\begin{align*}
\tau_x^k(w) = q_{0,k}q_{1,k} \dots q_{n,k}.
\end{align*}
If $m_i$ is the period of the sequence $q_{i,0}, q_{i,1}, \dots$, then $m = \gcd(m_0, m_1,\dots,  m_n)$. Let $M = \max_i m_i$. Then each prime power in the prime factorization of $m$ is a factor of some $m_i$, so it is at most $M$. The period $m$ is the product of these prime powers, so $m \leq M^{\abs{P}}$. That is, there is some $i$ such that $m_i \geq m^{1/\abs{P}}$.

Fix that $i$ for the rest of the proof, and let $v$ be the first $i$ letters of $w$. Consider the infinite word $s = \act{v}{(xxx\dots)}$, and let $x_k$ be it's $k$-th letter. Let $l$ be the period of the word $s$. Note that $q_{i,k+1} = q_{i,k}^{x_k}$ and therefore
\begin{align*}
q_{i,k+l} = q_{i,k}^{X},
\end{align*}
where
\begin{align*}
X =x_k x_{k+1} \ldots x_{k+l-1}.
\end{align*}
Since the $x_k$ repeat every $l$ letters, we have $q_{i,k+l} = q_{i,k}^X$, and $q_{i,k+2l} = q_{i,k+l}^X$, and so on. Let $F = \abs{Q}!$. Then $X^F$ acts trivially on $Q$, and hence $q_{i,k+Fl} = q_{i,k+Fl}$. This is true for each $k$, so the $q_{i,k}$ have period $m_i \leq Fl$. Thus, the word $s = \act{v}{(xxx\ldots)}$ has period $l \geq m^{1/\abs{P}}/F$.

Now let $n$ be sufficiently large, so that there is a word $w \in Q^n$ whose orbit under $\tauw_x$ has size $m \geq K^{n^\alpha}$. Then, from the above, for some $v \in Q^*$ with $\len(v) \leq n$, the word $s = \act{v}{xxx\dots}$ has period
\begin{align*}
l \geq \frac{1}{F} K^{ n^\alpha / \abs{P}} \geq \wt K ^{n^\alpha},
\end{align*}
where we fix some $1 < \wt K < K^{1/\abs{P}}$, and the last inequality holds for sufficiently large $n$.

Let $v_k = \tau_x^k(v)$. Then $\act{v_k}{xxx\dots}$ is a shift of $s$, and since $s$ has period $l$ there are $l$ distinct such shifts. So, since each $v_k$ satisfies $\len(v_k) \leq n$, the set $\{\act{w}xxx\dots \mid w \in Q^*, \len(w) \leq n \}$ has at least $l \geq \wt K ^{n^\alpha}$ elements. It follows that $\abs{B_\Gamma(xxx\dots, n)} \geq \wt K ^{n^\alpha}$, where $\Gamma = \Gamma_{\cm,\infty}$

So Theorem~\ref{thm:growth_to_diam} applies, and there is a constant $C$ such that $\diam(\Gamma_{\cm,n}) \leq C n^{1 + 1/\alpha}.$
\end{proof}

We also state the following special case, which is a simple way to apply the theorem.

\begin{cor}\label{cor:tree_orbit_to_diam}
Let $\cm = (Q,A,\sigma,\tau)$ be a reversible cyclic Mealy automaton with $\abs{A}$ prime. Suppose there is some $a \in A$, and some $d \geq 2$, such that there is a $d$-regular subtree $\T \subseteq Q^*$ such that $\tauw_a$ acts spherically transitively on $\T$. Then there is a constant $C>0$, such that for all~$n$, we have 
$$\diam(\Gamma_{\cm,n}) \, \leq \, Cn^2.$$
\end{cor}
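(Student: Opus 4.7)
The plan is to derive this directly from Theorem~\ref{thm:orbit_to_diam} by verifying that its orbit-growth hypothesis holds with exponent $\alpha = 1$. The point is that spherical transitivity of $\tauw_a$ on a $d$-regular subtree $\T \subseteq Q^*$ with $d \geq 2$ forces the orbits under $\tauw_a$ to contain entire levels of $\T$, and those levels already grow exponentially, so the hypothesis of Theorem~\ref{thm:orbit_to_diam} is met rather comfortably.

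First I would unpack the structure. Since $\T \subseteq Q^*$ is a subtree in the sense where the children of a word $w$ are of the form $qw$, the level $\T_n$ of a vertex inside $\T$ agrees with its length as a word in $Q^*$, so $\T_n \subseteq Q^n$. Because $\T$ is $d$-regular with $d \geq 2$, the level $\T_n$ has size at least $d^n$ (equal to $d^n$ under the convention that every vertex has $d$ children, or $d(d-1)^{n-1}$ otherwise; in either case one gets at least $K^n$ for some $K > 1$).

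Second, I would use spherical transitivity. Because $\tauw_a$ preserves $\T$ and acts spherically transitively on it, its restriction to $\T_n$ is a single cycle of length $|\T_n|$. Therefore for every $w \in \T_n$,
\[
\bigl|\{\tauw_a^k(w) \mid k \in \zz\}\bigr| \;=\; |\T_n| \;\geq\; K^n,
\]
which is exactly the hypothesis of Theorem~\ref{thm:orbit_to_diam} with the letter $x = a$, constant $K$ as above, and exponent $\alpha = 1$.

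Finally, Theorem~\ref{thm:orbit_to_diam} produces a constant $C > 0$ with $\diam(\Gamma_{\cm,n}) \leq C n^{1 + 1/\alpha} = C n^2$, completing the proof. There is no real obstacle here, since the hypotheses of the corollary were tailor-made to feed into Theorem~\ref{thm:orbit_to_diam}; the only thing to check carefully is the identification of the level of $\T$ with words of the appropriate length in $Q^*$, so that one is genuinely producing a word $w \in Q^n$ with a large $\tauw_a$-orbit.
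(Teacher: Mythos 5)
Your proposal is correct and matches the paper's intent exactly: the paper states this corollary as an immediate special case of Theorem~\ref{thm:orbit_to_diam} (with no separate proof given), obtained precisely by observing that spherical transitivity on a $d$-regular subtree with $d \geq 2$ yields, for every $n$, a word $w \in \T_n \subseteq Q^n$ whose $\tauw_a$-orbit is all of $\T_n$ and hence has size at least $2^n$, so the hypothesis holds with $\alpha = 1$. Your care about identifying levels of $\T$ with word length in the reversed-tree convention on $Q^*$ is exactly the right point to check, and it goes through.
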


%-------------------------------------------------------------------------------

\section{Cotransitive cyclic automata}\label{section:cotransitive}

The simplest way for the conditions in Corollary~\ref{cor:tree_orbit_to_diam} to be satisfied is when some $\tauw_a$ acts spherically transitively on the entire tree $Q^*$. With that in mind, we make the following definitions.

We say an invertible Mealy automaton $\cm = (Q, A, \sigma, \tau)$ is \emph{$q$-transitive} if the tree automorphism $\sigmaw_q : A^* \ra A^*$ is spherically transitive. We say $\cm$ is \emph{transitive}\footnote{A more natural definition of this term might be that the $\sigmaw_q$ together act transitively on each level of $A^*$, but that is too general for our purposes} if it is $q$-transitive for some $q \in Q$. We say $\cm$ is \emph{cotransitive} if its dual is transitive.

Then, according to Corollary~\ref{cor:tree_orbit_to_diam}, we have
\begin{cor}\label{cor:cotransitive_to_diam}
Let $\cm = (Q,A,\sigma,\tau)$ be a reversible cyclic cotransitive Mealy automaton with $\abs{A}$ prime. Then $\diam(\Gamma_{\cm,n}) = O(n^2)$.
\end{cor}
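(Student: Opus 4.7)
The plan is to derive the corollary directly from Corollary~\ref{cor:tree_orbit_to_diam} by taking the $d$-regular subtree $\T$ to be all of $Q^*$ and checking that cotransitivity provides exactly the spherical transitivity required.

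First, unpacking the hypothesis: by definition, $\cm$ cotransitive means that $\ol \cm$ is transitive, so there exists some state of $\ol \cm$---equivalently, some letter $a \in A$---for which $\sigmaw_{\ol \cm, a} : Q^* \to Q^*$ is spherically transitive, acting transitively on each level $Q^n$ of the tree of words in~$Q$.

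Next, I would translate this into a statement about $\tauw_{\cm, a}$ via the duality identity $\sigmaw_{\ol \cm}(s, w) = \ol{\tauw_{\cm}(\ol w, \ol s)}$ recorded in Section~\ref{section:mealy_automata}. Specializing to the single-letter case $s = a$ (for which $\ol s = a$) yields
\[ \sigmaw_{\ol \cm, a}(w) \,=\, \ol{\tauw_{\cm, a}(\ol w)}, \]
so $\sigmaw_{\ol \cm, a}$ and $\tauw_{\cm, a}$ are conjugate via word reversal. Since reversal is a level-preserving bijection of $Q^n$ for every $n$, the transitive action of $\sigmaw_{\ol \cm, a}$ on each level transfers immediately to $\tauw_{\cm, a}$. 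Viewed, per the paper's convention, as a tree automorphism of $Q^*$ with children obtained by prepending letters, $\tauw_{\cm, a}$ is therefore spherically transitive on $Q^*$.

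In the nontrivial case $\abs Q \geq 2$, the tree $Q^*$ is an $\abs Q$-regular subtree of itself with $d = \abs Q \geq 2$, so the hypotheses of Corollary~\ref{cor:tree_orbit_to_diam} are satisfied with $\T = Q^*$, giving $\diam(\Gamma_{\cm, n}) = O(n^2)$ directly. The only minor subtlety is reconciling the two tree structures on $Q^*$---children by appending for $\sigmaw_{\ol \cm}$ versus children by prepending for $\tauw_{\cm}$---but since spherical transitivity depends only on the level decomposition, which agrees in both conventions, this causes no difficulty. I expect no serious obstacle: the corollary is essentially a bookkeeping check once Corollary~\ref{cor:tree_orbit_to_diam} is available.
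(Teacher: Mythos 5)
Your proposal is correct and matches the paper's intent exactly: the paper gives no separate argument for this corollary, deriving it immediately from Corollary~\ref{cor:tree_orbit_to_diam} with $\T = Q^*$, which is precisely your route (your extra care with the duality/reversal bookkeeping and the trivial $\abs{Q}=1$ case only makes explicit what the paper leaves implicit).
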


We do not know a general method for determining whether a tree automorphism given by an automaton is transitive, but there are special cases where checking it is easier. For example, \cite{spherical_transitivity} gives a generalization of Lemma~\ref{lemma:binary_transitive} to all cyclic automata:

\begin{lemma}\label{lemma:cyclic_transitive}
Let $\cm = (Q, A, \tau, \sigma)$ be a cyclic automaton, with $\abs{A} = m$. Then there is a cyclic permutation $\rho$ of $A$, such that for each $q \in Q$ there is a $k_q$ s.t.~$\sigma_q = \rho^k$. Recursively define
\begin{align*}
\chi(q) = k_q + t \sum_{x \in A}{\chi(\tau_x(q))} \in \zz_m[[t]]
\end{align*}
Then $\sigmaw_q$ acts transitively on $A^*$ if and only if each coefficient of $\chi(q)$ is a generator of $\zz_m$.
\end{lemma}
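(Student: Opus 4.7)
The plan is to generalize Lemma~\ref{lemma:binary_transitive} by viewing $\chi$ as an abelianization. For a cyclic automaton, each $\sigmaw_q$ acts on the $n$-th level $A^n$ as an element of the iterated wreath power $\langle \rho \rangle^{\wr n} \cong C_m^{\wr n}$; the abelianization of this group is $C_m^n$, and the coefficient $c_n(q)$ of $t^{n-1}$ in $\chi(q)$ should be read as exactly the image in the $n$-th factor, namely the sum of all the ``local rotations'' by powers of $\rho$ appearing at depth $n$. The given recursion is precisely the wreath-product recursion for this invariant, so $\chi$ extends to a well-defined group homomorphism from the cyclic-wreath subgroup of $\Aut(A^*)$ to $(\zz_m[[t]], +)$, and the recursion has a unique solution because $Q$ is finite. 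Establishing this homomorphism property is routine wreath-product bookkeeping.

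Next I would prove by induction on $n$ that $\sigmaw_q$ acts as a single $m^n$-cycle on $A^n$ if and only if the first $n$ coefficients $c_1, \ldots, c_n$ of $\chi(q)$ are each generators of $\zz_m$. The base case $n=1$ is immediate, since the action on $A$ is $\rho^{k_q}$, which is an $m$-cycle exactly when $\gcd(k_q, m) = 1$.

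For the inductive step, write $g = \sigmaw_q = \rho^{k_q}(g_1, \ldots, g_m)$ with $g_i = \sigmaw_{\tau_{x_i}(q)}$. A standard orbit argument shows that $g$ is a single $m^n$-cycle if and only if its top-level action is a single $m$-cycle (so $k_q$ generates $\zz_m$) and, for some (equivalently, any) $i$, the section $\sec{g^m}{x_i}$ is a single $m^{n-1}$-cycle on $A^{n-1}$. Iterating the wreath recursion for $g$ yields
\[\sec{g^m}{x_i} \,=\, g_{i + (m-1)k_q} \cdots g_{i + k_q}\, g_i,\]
and since $k_q$ generates $\zz_m$ these indices exhaust $\{1,\ldots,m\}$. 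Applying the homomorphism $\chi$ then gives
\[\chi\bigl(\sec{g^m}{x_i}\bigr) \,=\, \sum_{j} \chi(g_j) \,=\, \frac{\chi(q) - k_q}{t},\]
whose coefficient of $t^{k-1}$ equals $c_{k+1}(\chi(q))$. Invoking the induction hypothesis on $\sec{g^m}{x_i}$ completes the step, and intersecting over all $n$ yields the full statement about transitivity on $A^*$.

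The main obstacle is purely organizational: making sure that $\chi$ really is a homomorphism (so that the ordering of factors in $\sec{g^m}{x_i}$ is irrelevant after passing to the abelian invariant) and that the orbit argument correctly reduces transitivity on $A^n$ to transitivity of a specific section on $A^{n-1}$. The conceptual content is the same as in Lemma~\ref{lemma:binary_transitive}: the sign character $\Sym(A^n) \to \zz_2$ is replaced by the abelianization $C_m^{\wr n} \to C_m^n$ of the iterated cyclic wreath product, and the condition that each coefficient of $\chi(q)$ generates $\zz_m$ is precisely the condition that this abelian invariant, at every level, is compatible with the action being a single long cycle.
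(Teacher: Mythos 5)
The paper offers no proof of this lemma at all --- it is quoted from \cite{spherical_transitivity} --- so there is nothing internal to compare against except the special case $m=2$, Lemma~\ref{lemma:binary_transitive}, which the paper proves by a parity argument using that the sign character is defined on the full symmetric group of each level. Your argument is the correct general-$m$ version and is essentially Steinberg's: since the sign character has no analogue for $m>2$, you rightly replace it by the abelianization of the iterated wreath power of $\gen{\rho}$, on which $\chi$ is a homomorphism, and you replace the paper's ``two $(2^n)$-cycles versus one $(2^{n+1})$-cycle'' dichotomy by the cleaner reduction ``$g$ is an $m^n$-cycle iff its top-level action is an $m$-cycle and the first-return map $\sec{g^m}{x_i}$ is an $m^{n-1}$-cycle,'' together with the computation $\chi(\sec{g^m}{x_i})=\sum_j\chi(g_j)=(\chi(q)-k_q)/t$, which correctly uses that the indices $i, i+k_q,\dots,i+(m-1)k_q$ exhaust $\zz_m$ when $k_q$ is a generator. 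This buys you an argument that works uniformly in $m$ and degrades gracefully to the paper's proof when $m=2$. One small repair: you state the induction only for automorphisms of the form $\sigmaw_q$, but you then invoke the inductive hypothesis on $\sec{g^m}{x_i}$, which is a \emph{product} of sections and generally not a single $\sigmaw_{q'}$; the induction must be phrased for arbitrary elements of the cyclic-wreath subgroup (equivalently, arbitrary finite products of states and their sections), with $\chi$ extended by the homomorphism property. Since you have already established that extension, this is a matter of bookkeeping rather than a gap.
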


An automaton is called \emph{cocyclic} if its dual is cyclic.  Now observe that the power series~$\chi(q)$ for $q \in Q$ satisfy a recursive linear relation, which can be solved to write each $\chi(q)$ as a rational function.  This implies:

\begin{cor}\label{cor:cyclic_transitive_alg}
Given a (co)cyclic Mealy automaton $\cm = (Q,A,\tau,\sigma)$, there is an algorithm to determine whether it is (co)transitive.
\end{cor}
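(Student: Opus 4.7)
The plan is to apply Lemma~\ref{lemma:cyclic_transitive} and reduce the infinite test of transitivity (``every coefficient of $\chi(q)$ is a generator of $\zz_m$'') to a finite check, using the fact that the power series $\chi(q)$ satisfy a finite linear system and hence have ultimately periodic coefficient sequences in $\zz_m$.

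Assume $\cm = (Q,A,\sigma,\tau)$ is cyclic with $\abs{A} = m$. First I would collect the recursion of Lemma~\ref{lemma:cyclic_transitive} into matrix form. Let $M$ be the $\abs{Q} \times \abs{Q}$ matrix over $\zz_m$ whose $(q,r)$-entry is $\#\{x \in A : \tau_x(q) = r\}$, and let $\vec{k}$ be the vector $(k_q)_{q\in Q}$. The recursion $\chi(q) = k_q + t\sum_{x \in A}\chi(\tau_x(q))$ becomes
\[
    \vec{\chi} \, = \, \vec{k} + tM\vec{\chi} \quad\text{in } \zz_m[[t]]^{|Q|}.
\]
Since $\det(I - tM) = 1 + O(t)$ is a unit in $\zz_m[[t]]$, the system has a unique solution $\vec{\chi} = (I - tM)^{-1}\vec{k}$. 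By Cramer's rule, each component $\chi(q)$ is a rational function in $t$ with denominator dividing $\det(I - tM)$, and so its coefficient sequence satisfies a linear recurrence of order at most $\abs{Q}$ with coefficients in $\zz_m$.

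Now the crux: any $\zz_m$-valued sequence satisfying a fixed linear recurrence of order $d$ is ultimately periodic, because the ``state vector'' of $d$ consecutive coefficients takes values in the finite set $\zz_m^d$, and the next state is a deterministic function of the current one. Compute the coefficients $c_0,c_1,c_2,\ldots$ of $\chi(q)$ one-by-one from the recurrence, recording each state $(c_n, c_{n+1}, \ldots, c_{n+d-1})$; within at most $m^d$ steps a state repeats, revealing the pre-period and period. Then $\chi(q)$ has every coefficient a generator of $\zz_m$ (i.e., coprime to $m$) if and only if this holds for all coefficients in the pre-period together with one full period; this is a finite check. Running it for each $q \in Q$ decides whether $\cm$ is transitive. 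For the cocyclic case, apply the same procedure to the dual $\ol{\cm}$, which is cyclic by hypothesis; by the definition of cotransitivity, $\cm$ is cotransitive iff $\ol{\cm}$ is transitive.

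There is no real obstacle here; the only thing to be careful about is verifying that the ``generator of $\zz_m$'' condition is preserved under the periodicity reduction, which is immediate since we are literally enumerating all coefficients up to the period. The algorithm is effective and in fact runs in time polynomial in $\abs{Q}$, $m$, and $m^{\abs{Q}}$, since solving the linear system and detecting the period can both be performed explicitly.
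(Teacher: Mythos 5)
Your proposal is correct and follows essentially the same route as the paper, which simply observes that the $\chi(q)$ satisfy a recursive linear relation solvable to give rational functions in $\zz_m[[t]]$ (whence eventually periodic coefficient sequences and a finite check via Lemma~\ref{lemma:cyclic_transitive}). You have merely supplied the standard details — invertibility of $I-tM$ since $\det(I-tM)$ has unit constant term, the resulting linear recurrence, and state-vector periodicity — that the paper leaves implicit.
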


\begin{figure}
\captionsetup[subfloat]{}
\centering

\makebox[0pt]{
\subfloat[2372]{
\begin{tikzpicture}[scale = .75]

	\node at (0:0cm) [state] (c) {$c$};
	\node at (0:3cm) [state] (a) {$a$};
	\node at (60:3cm) [state] (b) {$b$};
	
	\draw
		(a) edge[transition, /from to = {1}{0}{pos = .35, below left}, /from to = {0}{1}{pos = .65, below left}] (b) 
		(b) edge[transition, /from to = {1}{0}{above left}] (c) 
		(c) edge[transition, /from to = {1}{1}{below}] (a) 
		
		(b) edge[transition, bend left, /from to = {0}{1}{above right}] (a) 
		(c) edge[transition, /wide loop at = -150, /from to = {0}{0}{left}] (c) 
	;
\end{tikzpicture}
\label{subfig:32_nonabelian}
}
\subfloat[956]{
\begin{tikzpicture}[scale = .75]

	\node at (0:0cm) [state] (c) {$c$};
	\node at (0:3cm) [state] (a) {$a$};
	\node at (60:3cm) [state] (b) {$b$};
	
	\draw
		(a) edge[transition, /from to = {1}{0}{above right}] (b) 
		(b) edge[transition, /from to = {1}{0}{above left}] (c) 
		(c) edge[transition, /from to = {1}{1}{below}] (a) 
		
		(a) edge[transition, /wide loop at = -30, /from to = {0}{1}{below right}] (a) 
		(b) edge[transition, /wide loop at = 90, /from to = {0}{1}{above}] (b) 
		(c) edge[transition, /wide loop at = -150, /from to = {0}{0}{left}] (c) 
	;
\end{tikzpicture}
\label{subfig:32_abelian_start}
}
\subfloat[2396]{
\begin{tikzpicture}[scale = .75]

	\node at (0:0cm) [state] (c) {$c$};
	\node at (0:3cm) [state] (a) {$a$};
	\node at (60:3cm) [state] (b) {$b$};
	
	\draw
		(a) edge[transition, /from to = {1}{0}{above right}] (b) 
		(b) edge[transition, /from to = {1}{0}{above left}] (c) 
		(c) edge[transition, /from to = {1}{1}{below}] (a) 
		
		(a) edge[transition, /wide loop at = -30, /from to = {0}{1}{below right}] (a) 
		(b) edge[transition, /wide loop at = 90, /from to = {0}{1}{above}] (b) 
		(c) edge[transition, /wide loop at = -150, /from to = {0}{0}{left}] (c) 
	;
\end{tikzpicture}
}
}

\smallskip

\subfloat[870]{
\begin{tikzpicture}

	\node at (0:0cm) [state] (c) {$c$};
	\node at (0:3cm) [state] (a) {$a$};
	\node at (60:3cm) [state] (b) {$b$};
	
	\draw
		(a) edge[transition, pos = .47, /from to = {1}{0}{below left}] (b) 
		(b) edge[transition, pos = .47, /from to = {1}{1}{below right}] (c) 
		(c) edge[transition, /from to = {1}{1}{above}] (a) 
		
		(a) edge[transition, bend left, /from to = {0}{1}{below}] (c) 
		(b) edge[transition, bend left, /from to = {0}{0}{right}] (a) 
		(c) edge[transition, bend left, /from to = {0}{0}{left}] (b) 
	;
\end{tikzpicture}
}
\subfloat[2294]{
\begin{tikzpicture}

	\node at (0:0cm) [state] (c) {$c$};
	\node at (0:3cm) [state] (a) {$a$};
	\node at (60:3cm) [state] (b) {$b$};
	
	\draw
		(a) edge[transition, pos = .47, /from to = {1}{0}{below left}] (b) 
		(b) edge[transition, pos = .47, /from to = {1}{0}{below right}] (c) 
		(c) edge[transition, /from to = {1}{1}{above}] (a) 
		
		(a) edge[transition, bend left, /from to = {0}{1}{below}] (c) 
		(b) edge[transition, bend left, /from to = {0}{1}{right}] (a) 
		(c) edge[transition, bend left, /from to = {0}{0}{left}] (b) 
	;
\end{tikzpicture}
\label{subfig:32_abelian_end}
}
\caption{The five contransitive 3-state automata on a binary alphabet.}
\label{figure:cotransitive32}
\end{figure}


For example, it is straightforward to check that, there are 16 cocyclic invertible $(3,2)$-automata, and only four are cotransitive. These four are the automata pictured in Figures~\ref{subfig:32_abelian_start}--\ref{subfig:32_abelian_end}, i.e., automata number 956, 2396, 870, and 2294 in \cite{3_state_automata}.\footnote{Note that \cite{3_state_automata} does not distinguish between an automaton and its inverse. We do, so some of our automata are actually inverses of the automata described there.}

%-------------------------------------------------------------------------------

\section{Further examples}\label{section:examples}

\begin{Example} \label{example:nonabelian_23}
It can be verified that, except for the automata pictured in Figure~\ref{figure:cotransitive32}, every invertible $(3,2)$-automaton is not cotransitive: for each automaton, simply find orbits of each $\tauw_a$ which are proper subsets of $A^n$ for some $n$. In this case, it suffices to take $n = 4$.

The final automaton $\cm$ pictured in Figure~\ref{subfig:32_nonabelian}, which is automaton number 2372 in \cite{3_state_automata}, is not cocyclic. So, we do not have a mechanical procedure to prove it is cotransitive. It turns out, however, that there is an automorphism $\kappa : \{a,b,c\}^* \ra \{a,b,c\}^*$ such that $\kappa^{-1} \tauw_{\cm,1} \kappa$ can be computed by a cyclic automaton. Indeed, one can take $\kappa = \tauw_{\cC,x}$, where $\cC$ is the automaton in Figure~\ref{figure:conjugator}. Then we can compute the power series $\chi(\kappa^{-1} \tauw_{\cm,1} \kappa)$, and see directly that its coefficients are nonzero. At that point, Corollary~\ref{lemma:cyclic_transitive} implies that $\kappa^{-1} \tauw_{\cm,1} \kappa$ acts transitively on $Q^*$, and hence so does $\tauw_{\cm,1}$.
\end{Example}

\begin{figure}

\begin{tikzpicture}

	\node at (0cm,0) [state] (a) {$a$};
	\node at (3cm,0) [state] (b) {$c$};
	\node at (6cm,0) [state] (c) {$b$};
	
	\draw
		(a) edge[transition, bend right, /from to = {$x$}{$x$}{above}] (b)
		(b) edge[transition, bend right, /from to = {$x$}{$x$}{above}] (a)

		(c) edge[transition, /wide loop at = 90, /from to = {$y$}{$x$}{near start, above right}, /from to = {$x$}{$y$}{near end, above left}] (c)
		(a) edge[transition, /wide loop at = 180, /from to = {$y$}{$y$}{left}] (a)
		(b) edge[transition, /wide loop at = 0, /from to = {$y$}{$y$}{right}] (b)
		
	;
\end{tikzpicture}

\caption{An automaton to conjugate $\cm_{2372}$ into a cocyclic automaton.}
\label{figure:conjugator}
\end{figure}
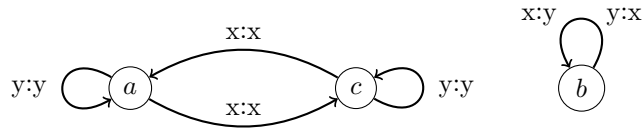

So, we have sketched a proof of the following:
\begin{prop} \label{prop:32_list}
The cotransitive invertible $(3,2)$-automata are precisely the five automata pictured in Figure~\ref{figure:cotransitive32}, up to relabeling of $A$ and $Q$.
\end{prop}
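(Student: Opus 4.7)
The plan is to proceed by exhaustive enumeration of the invertible $(3,2)$-automata up to relabeling of $A$ and $Q$, using the classification in \cite{3_state_automata}, and split the analysis into two cases depending on whether the automaton is cocyclic. In both cases ``cotransitive'' unwinds to a statement about transitivity of $\tauw_x: Q^* \to Q^*$ for some letter $x \in A = \{0,1\}$, i.e.\ transitivity of $\sigmaw_{\ol\cm, x}$ on $\wh Q^* = A^*$ where $\wh Q = A$, so Lemma~\ref{lemma:binary_transitive} and Lemma~\ref{lemma:cyclic_transitive} can be brought to bear on the dual.

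For the cocyclic automata — of which there are sixteen — I would apply Corollary~\ref{cor:cyclic_transitive_alg}: write down the finite linear recursion in $\zz_2[[t]]$ for the characteristic series $\chi(\sigmaw_{\ol\cm,q})$, $q \in Q$, solve it to express each $\chi$ as a rational function, and read off the coefficients. The automaton is cotransitive iff all coefficients of some $\chi(\sigmaw_{\ol\cm, x})$ equal the generator $1$ of $\zz_2$. This mechanical check yields exactly four cotransitive examples, namely the automata numbered $956$, $2396$, $870$, and $2294$ in \cite{3_state_automata}, corresponding to the four diagrams in Figure~\ref{figure:cotransitive32} other than $2372$.

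For the non-cocyclic automata I would rule out cotransitivity by a finite search. Since a spherically transitive action must be transitive at every level, it suffices to exhibit a single level $n$ and letter $x \in A$ for which $\tauw_x$ acts on $Q^n$ with more than one orbit; checking $n \leq 4$ disposes of every non-cocyclic invertible $(3,2)$-automaton except automaton $2372$ (Figure~\ref{subfig:32_nonabelian}).

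The main obstacle is this exceptional case: automaton $2372$ is cotransitive but not cocyclic, so Lemma~\ref{lemma:cyclic_transitive} does not apply to it directly. Following Example~\ref{example:nonabelian_23}, the plan is to produce an explicit tree automorphism $\kappa: Q^* \to Q^*$ such that the conjugate $\kappa^{-1} \tauw_{\cm_{2372}, 1} \kappa$ is realized by a cyclic automaton; the candidate $\kappa = \tauw_{\cC, x}$ given by the auxiliary automaton $\cC$ in Figure~\ref{figure:conjugator} works. Verifying this is a matter of unwinding the wreath recursion, but \emph{finding} the right $\kappa$ in advance is the one non-routine step. Once the conjugate is cyclic, Lemma~\ref{lemma:cyclic_transitive} applies, its characteristic series can be computed, and the coefficients can be seen to all be nonzero in $\zz_2$; since spherical transitivity is conjugation-invariant, this yields cotransitivity of $\cm_{2372}$. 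Combining the three cases gives exactly the five automata of Figure~\ref{figure:cotransitive32}, as claimed.
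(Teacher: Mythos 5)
Your overall strategy is exactly the paper's: rule out all but five automata by exhibiting, at some level $n\le 4$, an orbit of $\tauw_x$ that is a proper subset of $Q^n$; certify the cocyclic survivors with the characteristic-series algorithm of Lemma~\ref{lemma:cyclic_transitive} (obtaining $956$, $2396$, $870$, $2294$); and certify the one non-cocyclic survivor, automaton $2372$, by conjugating $\tauw_{\cm,1}$ into a cyclic automaton via the $\kappa$ of Figure~\ref{figure:conjugator}. The paper only sketches this argument, and your three-case organization is a faithful expansion of that sketch.

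There is, however, a concrete error in your duality bookkeeping that would derail the mechanical check as you describe it. For a $(3,2)$-automaton $\cm=(Q,A,\tau,\sigma)$, the dual $\ol\cm$ has states $\wh Q=A$ and alphabet $\wh A=Q$, so the automorphism $\sigmaw_{\ol\cm,x}$ (for $x\in A$) acts on $\wh A^{\,*}=Q^*$, the \emph{ternary} tree --- not on $A^*$ as you write. Consequently the characteristic series of Lemma~\ref{lemma:cyclic_transitive} must be taken in $\zz_3[[t]]$, and the transitivity criterion is that every coefficient be a generator of $\zz_3$, i.e.\ nonzero mod $3$; neither $\zz_2[[t]]$ nor Lemma~\ref{lemma:binary_transitive} applies here. (The paper does use the binary lemma in the proof of Lemma~\ref{lemma:bellaterra_transitive}, but only because the relevant invariant subtree of reduced words there happens to be binary.) The distinction matters: on a ternary tree a $3^n$-cycle is an \emph{even} permutation, so the mod-$2$ sign character of any spherically transitive automorphism of $Q^*$ vanishes identically past the constant term and certifies nothing. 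Once this is corrected --- compute $\chi\in\zz_3[[t]]$ for the duals of the sixteen cocyclic automata and for $\kappa^{-1}\tauw_{\cm_{2372},1}\kappa$, and check that all coefficients are units --- your argument goes through and coincides with the paper's.
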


\begin{Example} \label{example:23_automaton}
Of course, there are automata which are not cotransitive, but still satisfy the conditions of Corollary~\ref{cor:tree_orbit_to_diam}. As we saw, one example is the Bellaterra automaton. A natural and easy to analyze example is the automaton $\cm = (Q, A, \tau, \sigma)$ that implements division by $3$ modulo~$2^n$. (This is automaton number 924 in \cite{3_state_automata}. See \cite{solvable_automata} for more on this construction and related ones.) We will also see that its Schreier graphs do not form a family of expanders.

A quick way to define this automaton is that for $a,b \in Q = \{0,1,2\}$ and $x,y \in A = \{0,1\}$, we have $a \edgetol*{x \ft y} b$ if and only if $$a + 3y = x + 2b.$$ This automaton is pictured in Figure~\ref{figure:23_automaton}. Note that for convenience we abuse notation slightly and call two of the states, $0$ and $1$, by the same name as the letters in the alphabet.

\begin{figure}
\begin{tikzpicture}

	\node at (0cm,0) [state] (0) {$0$};
	\node at (2cm,0) [state] (1) {$1$};
	\node at (4cm,0) [state] (2) {$2$};
	
	\draw
		(0) edge[transition, /from to = {0}{0}{left}, /wide loop at = 180, min distance = 1cm] (0)
		(0) edge[transition, bend right, /from to = {1}{1}{below}] (1)

		(1) edge[transition, bend right, /from to = {1}{0}{above}] (0)
		(1) edge[transition, bend right, /from to = {0}{1}{below}] (2)

		(2) edge[transition, /from to = {1}{1}{right}, /wide loop at = 0, min distance = 1cm] (2)
		(2) edge[transition, bend right, /from to = {0}{0}{above}] (1)
	;
\end{tikzpicture}
\caption{}
\label{figure:23_automaton}
\end{figure}

By assumption, if $a \edgetol*{x \ft y} b$, then for any $x' \in \zz/2^{n-1}\zz$, we have the following equalities in $\zz/2^{n}\zz$:
\begin{align*}
x + 2b &= a + 3y\\
x + 2x' - a &= 3y + 2x' - 2b\\
\frac{(x + 2x') - a}{3} &= y + 2\frac{x' - b}{3}.\\
\end{align*}
That is, if $x$ is the least significant binary digit of a number $X \in \zz/ 2^n \zz$, and $x' \in \zz/ 2^{n-1} \zz$ is the number corresponding to the rest of its digits, then the least significant digit of $(X-a)/3$ is $y$, and the rest of the digits are given by $(x'-b)/3$. It follows that if we identify a number $x \in \zz/2^n \zz$ with its binary representation in $\{0,1\}^n$ (with the least significant digit on the left), then we have, for each $a \in \{0, 1, 2\}$,
\begin{align*}
\sigmaw_a(x) = \frac{x-a}{3}.
\end{align*}

By a symmetric argument, the dual of this automaton implements division by $2$ modulo $3$. Phrasing this in terms of the original automaton $\cm$, we interpret a length-$m$ word in $\{0, 1, 2\}$ as the representation of a number modulo $3^m$ written in ternary with the least significant digit on the right. Then for each $x \in \{0,1\}$,
\begin{align*}
\tauw_x(a) &= \frac{a-x}{2}.
\end{align*}

In particular, $\tauw_0$ divides a number by 2. Since $2$ generates the multiplicative group $(\zz/3^m\zz)^*$, that group is an orbit of $\tauw_0$. So for every $m$, there is an orbit of $\tauw_0$ in $Q^m$ of size $2\cdot 3^{m-1}$. By Theorem~\ref{thm:orbit_to_diam}, it follows that $\diam(\Gamma_{\cm,n}) = O(n^2).$ In fact, it can be checked explicitly that $\diam(\Gamma_{\cm,n}) = O(n)$. This can be seen from the observation that the sequence of applications of $\sigmaw_1$, $\sigmaw_2$, and $\sigmaw_3$ necessary to send the binary number $x$ to $00\ldots0$ is essentially the representation of $x$ in base $3$.

However, the group $G_\cm = \gen{\sigmaw_{0}, \sigmaw_{1}, \sigmaw_{2}}$ is generated by $\mu = \sigmaw_{0}^{-1}$ and $\alpha = \sigmaw_{1}^{-1} \sigmaw_{0}$, which are multiplication by $3$ and addition of $1$, respectively. E.g., $\sigmaw_{2} = \mu^{-1} \alpha^{-2}$. It follows that the group action factors through the group of upper-triangular $2$ by $2$ matrices via
\begin{align*}
\mu \mapsto
\begin{pmatrix}
3 & 0 \\
0 & 1
\end{pmatrix}
\qquad
\alpha \mapsto
\begin{pmatrix}
1 & 1 \\
0 & 1
\end{pmatrix}
\end{align*}
This group is solvable, and therefore amenable. It follows that its Schreier graphs with respect to a fixed set of generators cannot be expanders \cite[3.3.7]{lubotzky_book}. So, the family $\{\Gamma_{\cm,n}\}_{n=1}^\infty$ is not a family of expanders.
\end{Example}

So, there are automata to which our general results apply, but whose Schreier graphs do not form a family of expanders. More work is necessary to find sufficient conditions for when an automaton gives rise to a family of expanders.

\begin{Example}
It can be checked by a computation that there are no cotransitive invertible $(4,2)$-automata. It turns out it is enough to check the actions of the $\tau_a$ on $Q^4$.
\end{Example}

\begin{Example} \label{example:52}
There are seven $(5,2)$-automata which are not cocyclic, but act transitively on~$Q^{10}$. Of these, just one is bireversible, as the Aleshin and Bellaterra automata are. It is pictured in Figure~\ref{figure:25_bireversible}. Unlike the automaton in Figure~\ref{subfig:32_nonabelian}, it is unlikely that there is an automatic automorphism~$\kappa: Q^* \ra Q^*$ such that $\kappa^{-1} \tau_{0} \kappa$ is implemented by a cocyclic automaton. We have checked that if there is such a $\kappa$, the automaton implementing it would need to have at least $48668$ states.
\end{Example}

\begin{figure}
\begin{tikzpicture}[scale = 2]

	\node at (-54:1cm) [state] (a) {$a$};
	\node at (18:1cm) [state] (b) {$b$};
	\node at (90:1cm) [state] (c) {$c$};
	\node at (162:1cm) [state] (d) {$d$};
	\node at (-126:1cm) [state] (e) {$e$};
	
	\draw
		(a) edge[transition, /from to = {0}{1}{right}] (b)
		(b) edge[transition, /from to = {0}{0}{above right}] (c)
		(c) edge[transition, /from to = {0}{0}{above left}] (d)
		(d) edge[transition, /from to = {0}{0}{left}] (e)
		(e) edge[transition, /from to = {0}{1}{below}] (a)

		(b) edge[transition, bend right, /from to = {1}{1}{}] (e)
		(e) edge[transition, bend right, /from to = {1}{0}{}] (b)
		(c) edge[transition, bend left, <->, /from to = {1}{1}{}] (d)
		(a) edge[transition, /from to = {1}{0}{right, near end}, /wide loop at = -54, min distance = .5cm] (a)
	;
\end{tikzpicture}
\caption{The only candidate to be a cotransitive bireversible $(5,2)$-automaton.}
\label{figure:25_bireversible}
\end{figure}
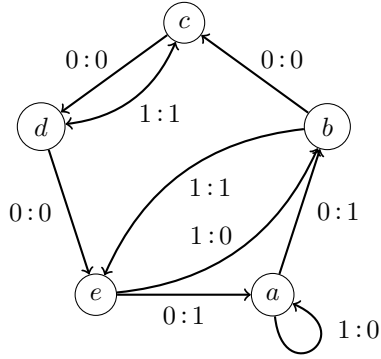

%-------------------------------------------------------------------------------

\section{Remarks and further work}\label{section:remarks}

\subsectionn{}\label{remark:non_invertible}
 The results in Section~\ref{section:general} can be extended to non-invertible Mealy automata as well.  Since we are primarily interested only in regular graphs, we prove only the simpler case. For example, to state Theorem~\ref{thm:growth_to_diam} more generally, one needs to consider the size of balls in $\Gamma_{\cm,\infty}$ defined in terms of directed paths, but the result about the diameter of $\Gamma_{\cm, n}$ still needs diameter to be defined in terms of undirected paths.

\subsectionn{}\label{remark:bellaterra_implies_aleshin}
As noted in the proof of Corollary~\ref{cor:aleshin_diameter}, any product of two generators of the Bellaterra group $G_\cb = \gen{\sigmaw_{\cb,a},\sigmaw_{\cb,b},\sigmaw_{\cb,c}}$ belongs to the Aleshin group $G_\ca = \gen{\sigmaw_{\ca,a},\sigmaw_{\ca,b},\sigmaw_{\ca,c}}$. We used this fact to show that, since the Bellaterra graphs have small diameter, so do the Aleshin graphs. In fact, it can also be used to show that if the Bellaterra graphs form a (two-sided) expander family, so do the Aleshin graphs. In other words, Conjecture~\ref{conj:bellaterra_expanders} implies Conjecture~\ref{conj:aleshin_expanders}.

\subsectionn{}
A tree automorphism $\alpha:A^* \ra A^*$ is spherically transitive if and only if it is conjugate in $\Aut(A^*)$ to the adding machine $\rho$, i.e., the automorphism which interprets a word in $A^n$ as the base-$\abs{A}$ representation of a number modulo $\abs{A}^n$, and adds one to that number. The adding machine is an automatic automorphism, e.g., the binary adding machine is pictured in Figure~\ref{figure:adding_automaton}.

One might hope that whenever an automatic automorphism $\alpha \in \aAut(A^*)$ is conjugate to $\rho$ in $\Aut(A^*)$, it is also conjugate to $\rho$ in $\aAut(A^*)$. If so, we would have an algorithm for determining whether a given automatic automorphism is transitive. In fact, since we can enumerate the transitive cyclic automata, it would be enough if every transitive $\alpha \in \aAut(A^*)$ were conjugate in $\aAut(A^*)$ to some cyclic automorphism.

However, Example~\ref{example:52} suggests that, in the dual of the automaton in Figure~\ref{figure:25_bireversible}, $\sigmaw_{0}$ is transitive but not conjugate in $\aAut(A^*)$ to any cyclic automaton, in particular to $\rho$. However, we  prove that $\sigmaw_0$ is not conjugate to a cyclic automaton, nor prove that it is actually transitive.

\begin{problem} \label{problem:cotransitive_conj_to_cyclic}
Exhibit a transitive $\alpha \in \aAut(A^*)$ which is not conjugate (in $\aAut(A^*)$) to a cyclic $\beta \in \aAut(A^*)$. (Or prove that there is no such $\alpha$.)
\end{problem}

\begin{problem} \label{problem:conj_to_cyclic_characterization}
Characterize the automorphisms in $\aAut(A^*)$ which are conjugate in $\aAut(A^*)$ to a cyclic automorphism.
\end{problem}

We can, however, exhibit a cyclic $\alpha \in \aAut(A^*)$ which is not conjugate in $\aAut(A^*)$ to the adding machine $\rho$:

\begin{prop}\label{prop:not_conj_to_adder}
Let $\cm = (Q,A,\tau,\sigma)$ be the dual of the automaton in Figure~\ref{subfig:32_abelian_start}, where $Q = \{0,1\}$ and $A = \{a,b,c\}$. Then $\sigmaw_1$ acts transitively on $A^*$, but there is no $\kappa \in \aAut(A^*)$ such that $\kappa^{-1} \sigmaw_1 \kappa = \rho$
\end{prop}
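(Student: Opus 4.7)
The plan is to verify spherical transitivity of $\sigmaw_1$ by a direct application of Lemma~\ref{lemma:cyclic_transitive}, and then try to rule out any hypothetical $\kappa$ by unwinding the wreath recursion.

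First, the dual automaton $\cm$ is cyclic: both states act on $A = \{a,b,c\}$ by powers of $\pi = (a\,b\,c)$, with $k_0 = 0$ and $k_1 = 1$, and the transitions are $\tau_a(0) = \tau_b(0) = 1$, $\tau_c(0) = 0$, $\tau_a(1) = \tau_b(1) = 0$, $\tau_c(1) = 1$. The recursion of Lemma~\ref{lemma:cyclic_transitive} thus becomes, in $\zz_3[[t]]$,
$$(1-t)\,\chi(0) = -t\,\chi(1), \qquad (1-t)\,\chi(1) = 1 - t\,\chi(0),$$
which I solve to find $\chi(\sigmaw_1) = (1-t)/(1+t) = 1 + t - t^2 + t^3 - t^4 + \cdots$. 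Every coefficient is a unit in $\zz_3$, so Lemma~\ref{lemma:cyclic_transitive} gives that $\sigmaw_1$ acts spherically transitively on $A^*$.

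For the non-conjugacy claim, set $\alpha = \sigmaw_0$ and $\beta = \sigmaw_1$, so $\alpha = (\beta,\beta,\alpha)$ and $\beta = \pi(\alpha,\alpha,\beta)$. Suppose for contradiction that $\kappa \in \aAut(A^*)$ satisfies $\beta\kappa = \kappa\rho$, and write $\kappa = \mu(\kappa_a,\kappa_b,\kappa_c)$. Comparing sections at $a$, $b$, $c$ on both sides forces $\mu$ to commute with $\pi$, so $\mu \in \{e,\pi,\pi^2\}$, and the section $\kappa_a$ must conjugate $\rho$ to one of the cyclic rearrangements
$$\beta\alpha^2,\quad \alpha\beta\alpha,\quad \alpha^2\beta,$$
one for each choice of $\mu$. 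I then iterate the same analysis on the equation for $\kappa_a$: expanding each of these three targets in its own wreath recursion and checking all three possibilities for the top permutation $\mu'$ of $\kappa_a$, one computes directly that in every one of the nine resulting subcases the element $\kappa_{aa}\,\rho\,\kappa_{aa}^{-1}$ has top permutation $\pi^2 \neq \pi$, so $\mu_{aa}$ is forced to be a transposition in $S_A$ rather than a power of $\pi$.

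The main obstacle is to close this iterative narrowing into a genuine contradiction. Each step $\gamma \mapsto \gamma'$ expresses the new target $\gamma'$ as a product of three depth-$1$ sections of $\gamma^3$, so the word length of $\gamma_n = \kappa_{a^n}\,\rho\,\kappa_{a^n}^{-1}$ in the free monoid on $\{\alpha,\beta\}$ triples at each step. My plan is to show that this growth survives in a suitable conjugation-invariant quotient of $\langle\alpha,\beta,\rho\rangle \subseteq \aAut(A^*)$ — ideally a homomorphism to $\zz$ or to a sufficiently large finite cyclic group that separates $\alpha$ from $\beta$ — and then to use conjugation-invariance of the image, together with the fact that $\kappa$ has only finitely many sections, to conclude that $\{\gamma_n\}_{n \geq 0}$ must take infinitely many distinct values in this quotient, a contradiction. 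The key technical step, and where I expect the real work to be, is thus a rank statement for the abelianization of $\langle\alpha,\beta\rangle$ that witnesses the non-trivial independence of $\alpha$ and $\beta$; alternatively, one might track the state-complexity of the minimal Mealy automaton representing each $\gamma_n$ and show it cannot remain uniformly bounded.
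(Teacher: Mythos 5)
Your transitivity argument is fine: the dual of automaton 956 is cyclic with the data you list, the recursion from Lemma~\ref{lemma:cyclic_transitive} does give $\chi(\sigmaw_1)=(1-t)/(1+t)$, and all coefficients are $\pm1$, hence generators of $\zz_3$. This is exactly how the paper establishes transitivity (it is part of the cotransitivity check behind Proposition~\ref{prop:32_list}), so that half needs no further comment.

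The non-conjugacy half has a genuine gap, and you say so yourself: the section-by-section narrowing (top permutation of $\kappa$ forced into $\gen{\pi}$, $\kappa_a$ forced to conjugate $\rho$ to one of $\beta\alpha^2$, $\alpha\beta\alpha$, $\alpha^2\beta$, etc.) is correct as far as it goes, but it never terminates in a contradiction --- an automorphism is perfectly free to have sections whose top permutations are transpositions, so the level-two observation proves nothing by itself. The only way your strategy closes is the one you gesture at: show that the targets $\gamma_n=\kappa_{s_n}\rho\kappa_{s_n}^{-1}$ are pairwise distinct elements of $\gen{\sigmaw_0,\sigmaw_1}$, so that $\kappa$ would need infinitely many distinct sections. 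That reduces to a nontrivial structural fact about the group $\gen{\sigmaw_0,\sigmaw_1}$ (e.g.\ that the exponent vectors $(0,1)\mapsto(p+2q,2p+q)$ descend to a faithful enough abelian invariant), which you explicitly defer; without it the proof is not complete. Be careful also with the phrase ``conjugation-invariant quotient'': if the quotient were invariant under conjugation by $\kappa$ itself, all the $\gamma_n$ would be identified with $\rho$ and the argument would collapse; what you actually need is a quotient of the subgroup $\gen{\alpha,\beta}$ only, which $\kappa$ does not normalize.

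The paper avoids all of this with a different, self-contained invariant. For an eventually periodic $v\in A^\infty$ let $h(v)$ be its pre-period. The adding machine satisfies $h(\rho^n(v))=O(\log n)$, and this bound survives conjugation by any finite-state $\kappa$ (a finite-state automorphism distorts pre-periods by a bounded amount, since it has finitely many sections). On the other hand, reading words in balanced ternary one checks $\sigmaw_1(w)=(w-1)/2$, whence $h(\sigmaw_1^{-n}(ccc\dots))\sim(\log_3 2)\thinspace n$ grows linearly. So $\sigmaw_1^{-1}$, and hence $\sigmaw_1$, cannot be conjugate to $\rho$ in $\aAut(A^*)$. This route requires no information about the group generated by $\sigmaw_0$ and $\sigmaw_1$, which is precisely the input your approach is missing.
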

\begin{proof}[Sketch of proof:]
Given an eventually periodic word $w \in A^*$, we let $h(w)$ denote the smallest number $n$ such that $w$ is periodic after the first $n$ letters.

Note that if $\rho$ is the adding machine, then for any eventually periodic word $v \in A^\infty$, we have
$$h(\rho^n(v)) = O(\log n).$$
Moreover, after a finite number of steps, the periodic part of $\rho^n(v)$ stabilizes. It follows that for any $\kappa \in \aAut(A^*)$, we have
$$h(\kappa \rho^n(v)) = O(\log n)$$
and since this applies to any $v$,
$$h(\kappa \rho^n \kappa^{-1}(v)) = O(\log n)$$

On the other hand, taking $\alpha = \sigmaw_1$, we can check that if we read $w \in A^*$ as a balanced ternary number (with $a = -1, c = 0, b = -1$), we have
\begin{align*}
\alpha(w) = \frac{w - 1}{2}.
\end{align*}
It follows that for $w = ccc\ldots$,
\begin{align*}
h(\alpha^{-n}(w)) \sim (\log_3 2)n
\end{align*}

Thus $\alpha^{-1}$ and $\rho$ are not conjugate in $\aAut(A^*)$. It is easy to check that $\rho$ and $\rho^{-1}$ are conjugate, so $\alpha$ and $\rho$ are not conjugate in $\aAut(A^*)$.
\end{proof}

\subsectionn{}

More generally, an open problem is the classification of conjugacy classes in $\aAut(A^*)$. The conjugacy classes of $\Aut(A^*)$ can be described in terms of orbit trees \cite{conjugation_in_tree_auts}. This tree captures the information about the orbits of an automorphism $\alpha \in \Aut(A^*)$, e.g. a ray with few branches in the orbit tree corresponds to a sequence of quickly growing orbits. Information about this tree can tell us whether we can apply, e.g., Theorem~\ref{thm:orbit_to_diam}.

Of course, not all orbit trees arise from elements of $\aAut(A^*)$, since there are uncountably many. Moreover, not all automatic automorphisms with the same orbit tree are conjugate in $\aAut(A^*)$, as seen in Proposition~\ref{prop:not_conj_to_adder}.

In \cite{conjugacy_problem}, the problem is solved for \emph{bounded automorphisms}, and more generally automorphisms with \emph{finite orbit-signalizer}. Such ``small'' automorphisms are unlikely to give expanders, so we are interested in the other end of the spectrum, automorphisms with many nontrivial sections on every level.

\subsectionn{}

\emph{Automaton groups}, i.e., groups of the form $G_\cm$ for some Mealy automaton $\cm$, are of independent interest in group theory.\footnote{Note that the term \emph{automatic group} has a different meaning in the literature, one we do not use in this paper.} A famous example is the \emph{Grigorchuk group}, which is the first known group whose growth function is intermediate between polynomial and exponential (see \cite{growthintro, solved_unsolved}). For more on automaton groups, see \cite{branch_groups,automata_dynamical,Nek}.

\subsectionn{}\label{remark:amenable_nonexpanders}

The structure of $G_\cm$ as an abstract group can give us information on whether or not the graphs $\Gamma_{\cm,n}$ form a family of expanders. For example, if $G_\cm$ is amenable then $\{\Gamma_{\cm,n}\}_{n=1}^\infty$ is not a family of expanders \cite[3.3.7]{lubotzky_book}. We already used this fact in Example~\ref{example:23_automaton} to show that the Schreier graphs of the automaton defined there are not expanders.

\subsectionn{}\label{remark:property_t_expanders}

On the other hand, sometimes the structure of $G_\cm$ is enough to guarantee that $\{\Gamma_{\cm,n}\}_{n=1}^{\infty}$ is a family of expanders.
Notably, if $\Gamma_1, \Gamma_2, \dots$ are Schreier graphs (with respect to a fixed generating set) of a group with \emph{Kazhdan property $(T)$}, and $\abs{\Gamma_i} \ra \infty$, then these graphs must form a family of expanders \cite[3.3.4]{lubotzky_book}. This fact was used by Margulis to give the first explicit construction of expanders \cite{margulis}.

In \cite{square_complexes}, it was shown that there are Mealy automata $\cm$ for which $G_\cm$ has property $(T)$, so Mealy automata can be used to construct expander families.  The groups $G_\ca$ and $G_\cb$ do not have property $(T)$, so this approach is not sufficient to prove Conjectures~\ref{conj:aleshin_expanders} and~\ref{conj:bellaterra_expanders}.

\subsectionn{}\label{ramanujan_lifts}

In a recent preprint, \cite{all_degree_expanders}, the ideas of \cite{random_lifts_expansion} were extended to construct families of bipartite Ramanujan graphs (i.e., expander graphs with optimal spectral gap) of arbitrary degree. The construction uses a new technique to pick a particular 2-lift of a graph which does not introduce any new large eigenvalues. We should note that this construction is not \emph{very explicit}, in the sense given above.

\subsectionn{}\label{asymptotic}

In~\cite[Section~10]{some}, Grigorchuk shows that in a certain formal sense, the Aleshin and Bellaterra automata are examples of \emph{asymptotic expanders}, thus giving further evidence to Conjectures~1.2 and~1.3.  He also states these conjectures as open problems, and suggests that a sequence of Schreier graphs constructed by a finite automaton cannot be Ramanujan.

\bigskip

\noindent
{\bf Acknowledgements} \ We are grateful to Slava Grigorchuk, Volodia Nekrashevych, Yehuda Shalom and Terry Tao for helpful discussions.  The second author was partially supported by the~NSF.

%\newpage

\bigskip

\bigskip

\end{document}